\definecolor{red}{rgb}{1,0,0}
\definecolor{blue}{rgb}{0,0,1}
\definecolor{green}{rgb}{0,.6,0}
\newtheorem{thm}{Theorem}[section]
\newtheorem{cor}[thm]{Corollary}
\newtheorem{prop}[thm]{Proposition}
\newtheorem{conj}[thm]{Conjecture}
\newtheorem{obs}[thm]{Observation}
\theoremstyle{definition}
\theoremstyle{definition}
\newtheorem{defn}[thm]{Definition}
\theoremstyle{definition}
\newtheorem{ex}[thm]{Example}
\newcommand{\bit}{\begin{itemize}}
\newcommand{\eit}{\end{itemize}}
\newcommand{\ben}{\begin{enumerate}}
\newcommand{\een}{\end{enumerate}}
\newcommand{\beq}{\begin{equation}}
\newcommand{\eeq}{\end{equation}}
\newcommand{\bea}{\begin{eqnarray*}} 
\newcommand{\eea}{\end{eqnarray*}}
\newcommand{\bpf}{\begin{proof}}
\newcommand{\epf}{\end{proof}\ms}
\newcommand{\bmt}{\begin{bmatrix}}
\newcommand{\emt}{\end{bmatrix}}
\newcommand{\ms}{\medskip}
\newcommand{\noi}{\noindent}
\newcommand{\ds}{\displaystyle}
\newcommand{\G}{\mathcal{G}}
\newcommand{\h}{\mathcal{H}}
\newcommand{\T}{\mathcal{T}}
\newcommand{\K}{\mathcal{K}}
\newcommand{\lolli}{\mathscr{L}}
\newcommand{\lp}{\left (}
\newcommand{\rp}{\right )}
\newcommand{\gpi}{\gamma_{P_{I}}}
\DeclareMathOperator{\spi}{sp}
\DeclareMathOperator{\pn}{pn}
\definecolor{lightgrey}{gray}{0.5}
\title{Infectious power domination of hypergraphs}
\author{Beth Bjorkman \thanks{Iowa State University Department of Mathematics}}
\date{February 27, 2019}
\begin{document}
\maketitle

\begin{abstract} 
The power domination problem seeks to find the placement of the minimum number of sensors needed to monitor an electric power network. We generalize the power domination problem to hypergraphs using the infection rule from Bergen et al: given an initial set of observed vertices, $S_0$, a set $A\subseteq S_0$ may infect an edge $e$ if $A\subseteq e$ and for any unobserved vertex $v$, if $A\cup \{v\}$ is contained in an edge, then $v\in e$. We combine a domination step with this infection rule to create \emph{infectious power domination}. We compare this new parameter to the previous generalization by Chang and Roussel. We provide general bounds and determine the impact of some hypergraph operations.
\end{abstract}


\section{Overview}\label{sec:overview}

The power domination problem seeks to find the placement of the mimimum number of sensors (called Phase Measurement Units or PMUs) needed to monitor an electric power network. In \cite{histpowdom}, Haynes et al. defined the power domination problem in graph theoretic terms by placing PMUs at a set of initial vertices and then applying observation rules to the vertices and edges of the graph. These observation rules consist of an initial domination step followed by what is now called the zero forcing process \cite{brueniheath05}, \cite{bfffhvw18}. Zero forcing is a graph propagation process that has its roots in determining the maximum nullity of the family of real symmetric matrices associated with the graph \cite{AIMminrank} and independently in control of quantum systems \cite{quantum}. 

Zero forcing has been generalized to hypergraphs in several different ways.  Bergen et. al defined the infection number of a hypergraph in \cite{infection18} to generalize the zero forcing process to hypergraphs. In \cite{hoghyp18}, Hogben defines the zero forcing number of a hypergraph based on the skew symmetric zero forcing number of a graph and the maximum nullity of a family of hypermatrices.  Chang and Roussel define $k$-power domination for hypergraphs in \cite{kpower15}, which is a power domination process when $k=1$. From this rule in \cite{kpower15}, Hogben also defines the power domination zero forcing number in \cite{hoghyp18}. 

Just as a power domination rule can be used to define a zero forcing process for hypergraphs, a zero forcing process can be used to define a power domination process for hypergraphs. Hogben's zero forcing number is less useful for power domination, as the zero forcing number of a hypergraph can be zero, which eliminates the real world connection to sensor placement. 

Our premise is to use Bergen et.~al's definition of infection \cite{infection18} to define infectious power domination of hypergraphs and compare this to the definition of power domination of hypergraphs introduced by Chang and Roussel. In Section \ref{sec:prelim} we will review preliminary definitions from past work. Then in Section \ref{sec:infpowdom} we formally define infectious power domination and compare it to both the infection number and the power domination number. In Section \ref{sec:genbounds} we determine general bounds for the power domination number and by extension the infectious power domination number. Section \ref{sec:determination} consists of results for the infectious power domination number including hypergraphs which have infectious power domination number one and hypertrees. Section \ref{sec:operations} determines bounds for the infectious power domination number for various hypergraph operations such as edge/vertex removal, linear sum, Cartesian products, and weak coronas. We make concluding remarks in Section \ref{sec:conclusion}.

\section{Preliminaries}\label{sec:prelim} 

We use Bretto's \emph{Hypergraph Theory} \cite{bretto} as a reference for hypergraph notation and definitions.

A \emph{hypergraph}, $\h=(V(\h),E(\h))$, is a set of vertices $V(\h)$ along with a set of edges $E(\h)$ so that $E(\h)$ is a subset of the power set of $V(\h)$. In the case that there is a constant $k$ such that $|e| =k$ for all $e\in E(\h)$,  we say that $\h$ is \emph{k-uniform} and denote such a hypergraph by $\h^{(k)}$. 

A \emph{path} in a hypergraph $\h$ is a sequence of vertices and edges $v_1,e_1,v_2,e_2,\ldots,v_\ell,e_\ell,v_{\ell+1}$ so that the $v_i$ are distinct vertices, the $e_i$ are distinct edges, and $v_i\in e_i$ for all $i$. We say that the path  $v_1,e_1,v_2,e_2,\ldots,v_\ell,e_\ell,v_{\ell+1}$ is a \emph{path from $v_1$ to $v_{\ell+1}$} and has \emph{length} $\ell$. A hypergraph is said to be \emph{connected} if there is a path from any vertex to any other vertex.  We say that a hypergraph $\h$ is \emph{reduced} if for all distinct edges $e,e'\in E$, $e\not\subseteq e'$ and $e'\not\subseteq e$; that is, no edge is contained in another edge. We may reduce a given hypergraph by removing every edge that is a proper subset of another edge. Throughout what follows, we will consider only hypergraphs with at least one edge that are reduced. 

The \emph{closed neighborhood} of a vertex $a\in V$ is $N[a]=  \ds \bigcup_{a\in e\in E} e$. The \emph{(open) neighborhood} of $a\in V$ is $N(a)= N[a]\setminus \{a\}$ and an element of $N(a)$ is called a \emph{neighbor} of $a$. The \emph{degree} of a vertex $a\in V$, denoted $\deg(a)$, is the number of edges that contain $a$. If $\deg(a)=0$, that is, $a$ is not contained in any edge, we say that $a$ is an \emph{isolated vertex}. An edge consisting of exactly one vertex is called a \emph{loop}. If vertices $a$ and $b$ are neighbors, we say that $a$ is \emph{adjacent} to $b$. When vertex $a$ is contained in edge $e$ we say that $e$ is \emph{incident} to $a$.

An \emph{induced subhypergraph} $\h'$ of a hypergraph $\h=(V,E)$ has a vertex set $V'\subseteq V$ and the edge set is $E'=\{e_i \cap V'  \neq \varnothing : e_i \in E, \mbox{ and either } e_i \text{ is a loop or } |e_i\cap V'|\geq 2\}$.  In this case, we say that $V'$ \emph{induces} the subhypergraph $\h'$. Note that if $\h$ is a $k$-uniform hypergraph that $\h'$ need not be uniform.


A \emph{dominating set} of a hypergraph $\h$ is a set of vertices $D\subseteq V(\h)$ so that for every vertex $v\in V\setminus D$, there exists an edge $e\in E(\h)$ for which $v\in e$ and $e\cap D\neq \varnothing$ \cite{henninglowenstein12}. That is, a dominating set is $D\subseteq V(\h)$ so that $V=\cup_{d\in D} N[d]$. The size of a minimum dominating set of $\h$ is called the \emph{domination number} of $\h$ and is denoted by $\gamma(\h)$.

\begin{defn}\rm{\cite{infection18}}\label{def:infection}
The \emph{infection rule} is defined so that a nonempty set $A$ of infected vertices can infect the vertices in an edge $e$ if:
\ben
\item $A\subseteq e$, and
\item if $v$ is an uninfected vertex such that $A\cup \{v\}$ is a subset of some edge in the hypergraph, then $v\in e$.
\een
\end{defn}

An initial set of infected vertices $S_0$ is called an \emph{infection set} if after repeated application of the infection rule all vertices become infected. The size of a minimum infection set is called the \emph{infection number} of the hypergraph $\h$ and is denoted by $I(\h)$.

\begin{defn}\rm{\cite{kpower15}}\label{def:powdomprocess}
The \emph{1-power domination process} consists of an initial subset of the vertices, $S_0$, called the \emph{power dominating set}  and the observation rules
\ben
\item[a.] A vertex in the power dominating set observes itself and all of its neighbors
\item[b.] If all unobserved neighbors of an observed vertex $v$ are in one edge incident to $v$, then these unobserved vertices become observed as well.
\een
We refer to step a as the \emph{domination step} and each repetition of b as an \emph{observation step}. 
\end{defn}

In this case, we say that a set of vertices $A$ \emph{observes} a set of vertices $B$ if $A$ causes $B$ to become observed. A \emph{power dominating set} of a hypergraph $\h$ is an initial set so that every vertex in $\h$ is observed at the termination of the 1-power domination process. The \emph{power domination number} of a hypergraph $\h$, denoted $\gamma_P(\h)$, is the minimum cardinality of a power dominating set of $\h$.  This is the same as a 1-power dominating set and the 1-power domination number as defined in \cite{kpower15} (originally denoted by $\gamma_P^1(\h)$).


\section{Infectious power domination}\label{sec:infpowdom}

We can now generalize power domination to hypergraphs based on the definition of infection in \cite{infection18}. 

\begin{defn}\label{def:infpowdom} Suppose $\h = (V,E)$ is a hypergraph. The \emph{infectious power domination process} on $\h$ with initial set $S_0\subseteq V$ is 
\ben
\item $S=\ds \bigcup_{v\in S_0} N[v]$.


\item While there exists a nonempty $A\subseteq S$ so that $A$ can infect the vertices in an edge $e$ using Definition \ref{def:infection}, add the vertices of $e$ to $S$.
\een
\end{defn}
An \emph{infectious power dominating set} of a hypergraph $\h$ is an initial set $S_0$ such that every vertex in $\h$ is in $S$ after the termination of the infectious power domination process. The \emph{infectious power domination number} of a hypergraph $\h$ is the minimum cardinality of an infectious power dominating set of $\h$, which we denote by $\gpi(\h)$. We say that a vertex in $S$ is \emph{infected} and that a set of vertices $A$ \emph{infects} a set of vertices $B$ if $A$ causes $B$ to join $S$. We call step 1 the \emph{domination step} and each repetition of step 2 an \emph{infection step}.


As an infection set can infect a graph without needing the power domination step, such a set is also an infectious power dominating set. Thus we have the following observation.

\begin{obs}\label{obs:gpileqinf} For any hypergraph $\h$, $\gpi(\h)\leq I(\h)$. \end{obs}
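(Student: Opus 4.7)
The plan is to show that any infection set $S_0$ is automatically an infectious power dominating set; then $\gpi(\h)\leq |S_0|$, and taking a minimum infection set yields the inequality. So let $S_0$ be an infection set with $|S_0|=I(\h)$.

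Run the infectious power domination process from $S_0$. After step 1 (the domination step) we have $S = \bigcup_{v\in S_0} N[v] \supseteq S_0$. Now I want to argue that from this larger starting set, the pure infection rule of Definition~\ref{def:infection} will still infect every vertex. For this I need a monotonicity fact: if $T \subseteq T'$ are two sets of infected vertices and the infection rule (applied iteratively) from $T$ infects all of $V(\h)$, then so does the infection rule from $T'$. This is the only content of the observation.

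To see monotonicity, note that the condition for a set $A$ of infected vertices to infect an edge $e$ is monotone in the infected set: requiring "for every \emph{uninfected} $v$ with $A\cup\{v\}$ contained in some edge, $v\in e$" becomes strictly easier when there are fewer uninfected vertices. Hence any single infection step that is legal from $T$ is also legal from any $T'\supseteq T$ (either $A\subseteq T'$ already infects $e$, or $e$ is already contained in $T'$). By induction on the number of infection steps in the original infection sequence witnessing that $S_0$ infects $V(\h)$, the same edges can be infected (in the same order) starting from any superset, so all of $V(\h)$ is reached.

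Applying this with $T=S_0$ and $T'=S$ shows that the infectious power domination process starting from $S_0$ eventually infects every vertex, so $S_0$ is an infectious power dominating set, giving $\gpi(\h)\leq |S_0|=I(\h)$. There is no real obstacle here; the only thing worth spelling out is the monotonicity of the infection rule in the set of already-infected vertices, which the statement of Definition~\ref{def:infection} makes immediate.
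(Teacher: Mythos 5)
Your proof is correct and follows the same route as the paper, which simply notes that an infection set is automatically an infectious power dominating set. The only thing you add is an explicit justification of the monotonicity of the infection rule in the set of already-infected vertices (needed because the domination step enlarges $S_0$ to $N[S_0]$ before infection begins), which the paper leaves implicit; your argument for it is sound.
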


Chang and Roussel's definition in \cite{kpower15}, restated in Definition \ref{def:powdomprocess}, is equivalent to  Definition \ref{def:infpowdom} with the restriction that $A$ must always be a single vertex.  The next proposition is immediate and is also an easy consequence of Theorem 2.4 in \cite{hoghyp18}.

\begin{prop}\label{prop:leqpower}
For any hypergraph $\h$, $\gpi(\h) \leq \gamma_P(\h)$. 
\end{prop}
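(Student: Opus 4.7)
The plan is to show directly that any power dominating set in the sense of Definition \ref{def:powdomprocess} is also an infectious power dominating set in the sense of Definition \ref{def:infpowdom}; taking a minimum one then yields $\gpi(\h) \leq \gamma_P(\h)$.

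The domination step is identical in both processes: step~1 of Definition \ref{def:infpowdom} puts $\bigcup_{v \in S_0} N[v]$ into $S$, which matches rule~a of Definition \ref{def:powdomprocess}. So it suffices to show that every application of Chang and Roussel's observation rule~b can be reproduced as an application of the infection rule with a singleton $A$. Concretely, suppose at some stage of the 1-power domination process an observed vertex $v$ has all of its unobserved neighbors contained in a single incident edge $e$. I would set $A = \{v\}$ and check the two conditions of Definition \ref{def:infection}: first, $A \subseteq e$ because $v \in e$; second, if $u$ is an uninfected vertex with $A \cup \{u\} = \{v,u\}$ contained in some edge of $\h$, then $u$ is an unobserved neighbor of $v$, and by hypothesis every such $u$ lies in $e$. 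Hence $\{v\}$ infects $e$, and every vertex observed by rule~b of the 1-power domination process is infected at the corresponding step of the infectious power domination process.

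A short induction on the number of observation/infection steps then shows that, starting from the same $S_0$, the set of observed vertices produced by Definition \ref{def:powdomprocess} is always contained in the set $S$ produced by Definition \ref{def:infpowdom}. If $S_0$ is a power dominating set of $\h$, the Chang--Roussel process terminates with every vertex observed, so the infectious power domination process terminates with every vertex infected, making $S_0$ an infectious power dominating set. Applying this to a minimum power dominating set gives $\gpi(\h) \leq \gamma_P(\h)$.

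There is no real obstacle here; the only care needed is in the correspondence between the phrase ``all unobserved neighbors of $v$'' and the condition ``$\{v\} \cup \{u\}$ is contained in some edge,'' which matches the paper's remark that Definition \ref{def:powdomprocess} is exactly Definition \ref{def:infpowdom} restricted to singleton infecting sets.
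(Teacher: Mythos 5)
Your proposal is correct and follows exactly the route the paper takes: the paper notes that Definition \ref{def:powdomprocess} is Definition \ref{def:infpowdom} restricted to singleton infecting sets $A$ and declares the proposition immediate, and your argument simply writes out that observation in full (verifying that rule~b corresponds to an application of the infection rule with $A=\{v\}$, plus the induction showing observed vertices stay infected). No gaps; this is a faithful expansion of the paper's one-line justification.
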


\noi The inequality in Proposition \ref{prop:leqpower} is not an equality, as shown in Example \ref{ex:linear}.

\begin{figure}[!ht]\caption{A hypergraph $\h^{(3)}$ with $\gpi(\h^{(3)})<\gamma_{P}(\h^{(3)})$.}\label{fig:lindif} 
\begin{center}
\begin{tikzpicture}
\foreach \a in {1,2,3}
    {\node[circle,fill=black,scale=0.5] (u\a) at ({\a*120-30}:1.25){};}
\foreach \a in {4,5,6}
    {\node[circle,fill=black,scale=0.5] (u\a) at ({\a*120+30}:.8){};}    
\foreach \a in {7,8,9}
    {\node[circle,fill=black,scale=0.5] (u\a) at ({\a*120-30}:2.25){};}
\foreach \a in {9,10,11}
    {\node[circle,fill=black,scale=0.5] (u\a) at ({\a*120-30}:3.25){};}
\node[left] (u1) at ({1*120-30}:1.25){3};
\node[left] (u2) at ({2*120-30}:1.25){9};
\node[left] (u3) at ({3*120-30}:1.25){5};
\node[left] (u4) at ({4*120+30}:.8){12};
\node[left] (u5) at ({5*120+30}:.8){8};
\node[right] (u6) at ({6*120+30}:.8){4};
\node[left] (u7) at ({7*120-30}:2.25){2};
\node[left] (u8) at ({8*120-30}:2.25){10};
\node[right] (u9) at ({9*120-30}:2.25){6};
\node[left] (u10) at ({10*120-30}:3.25){1};
\node[left] (u11) at ({11*120-30}:3.25){11};
\node[right] (u12) at ({12*120-30}:3.25){7};
\draw[rotate=30] (-2.25,0) ellipse (1.75cm and 0.75 cm);    
\draw[rotate=90] (2.25,0) ellipse (1.75cm and 0.75 cm);    
\draw[rotate=150] (-2.25,0) ellipse (1.75cm and 0.75 cm);    
\draw[rotate=120] (0,-0.66) ellipse (1.75cm and 0.75 cm);    
\draw[rotate=180] (0,0.66) ellipse (1.75cm and 0.75 cm);    
\draw[rotate=60] (0,0.66) ellipse (1.75cm and 0.75 cm);    
\end{tikzpicture}
\end{center}
\end{figure}

\begin{ex}\label{ex:linear}
For the hypergraph $\h^{(3)}$ in Figure \ref{fig:lindif}, $\gpi(\h^{(3)})=1 < \gamma_{P}(\h^{(3)})=2$. 

By symmetry, we need only check $\{2\}$, $\{3\}$, and $\{4\}$ as possible infectious power dominating sets or power dominating sets of size 1. 
\bit

\item $S_0=\{2\}$: 2 observes 1 and 3. Then 1 and 2 have no unobserved neighbors and 3 has unobserved neighbors in $\{3,12,9\}$ and $\{3,4,5\}$, and no other subset of $\{1,2,3\}$ is contained edge with unobserved vertices so no observation (or infection) step can occur. 
\item $S_0=\{4\}$: 4 observes $\{3,4,5\}$. Vertex 3 is in $\{1,2,3\}$ and $\{3,12,9\}$. Vertex 5 is in $\{5,6,7\}$ and $\{9,4,5\}$. Thus no more subsets of the observed vertices can observe an edge, nor can an infection step occur.
\item $S_0=\{3\}$: 3 observes 1, 2, 4, 5, 9, and 12. The only observed vertices adjacent to unobserved vertices are 5 and 9. Vertex 5 is contained in $\{9,8,5\}$ and $\{5,6,7\}$ so cannot observe an edge by itself. Similarly, $\{9\}$ cannot observe an edge. Thus no observation step can occur and so $\{3\}$ is not a power dominating set. However, $\{9,5\}$ can infect $\{9,8,5\}$. Then 5 infects $\{5,6,7\}$ and 9 infects $\{9,10,11\}$. Thus $\gpi(\h^{(3)}) = 1$.
\eit

For the power domination number, no one vertex is a power dominating set and so $\gamma_P(\h^{(3)}) > 1$. There is a power dominating set of size two: let $S_0=\{3,5\}$. In the domination step, vertices 1, 2, 4, 6, 7, 8, 9, and 12 become observed. Then $\{9\}$ observes $\{9,10,11\}$. Therefore $\{3,5\}$ is a power dominating set of $\h^{(3)}$ and $\gamma_{P}(\h^{(3)})=2$.
\end{ex}

Next we present examples showing that the infection number can be drastically different from the power domination number and infectious power domination number, particularly in the case of a $k$-uniform hypergraph when $n$ is large and $k$ is small. 
The complete $k$-uniform hypergraph, $\K_{n}^{(k)}$, is the $k$-uniform hypergraph with vertex set $\{1,2,\ldots,n\}$ and edge set all $k$-sets of the vertex set. 

\begin{prop} {\rm\cite[Lemma 3.1]{infection18}}
$I\lp\K_n^{(k)} \rp = n-k+1$.
\end{prop}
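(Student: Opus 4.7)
The plan is to prove the equality in both directions, exploiting the fact that in $\K_n^{(k)}$ every $k$-subset of vertices is an edge, which causes condition 2 of the infection rule to become very restrictive.

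For the upper bound $I(\K_n^{(k)}) \leq n-k+1$, I would pick any set $S_0$ of $n-k+1$ vertices and write $U = V \setminus S_0$, so $|U| = k-1$. Choosing any $v \in S_0$ and setting $A = \{v\}$, the set $e := \{v\} \cup U$ has size exactly $k$ and so is an edge of $\K_n^{(k)}$. Condition 1 of Definition \ref{def:infection} holds since $A \subseteq e$. For condition 2, every uninfected vertex lies in $U$, hence in $e$, so the condition is satisfied. Thus $A$ infects $e$, infecting all of $U$ in a single step. The case $k=1$ is vacuous and consistent with the formula, since then $|U| = 0$.

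For the lower bound $I(\K_n^{(k)}) \geq n-k+1$, I would argue that if the number of infected vertices $s$ satisfies $s \leq n-k$, then no infection step can occur, so the infected set cannot grow beyond $S_0$. Suppose some $A \subseteq S$ with $|A| = a \geq 1$ infects an edge $e$. If $a = k$, then $A = e$ and no new vertex is added. If $a < k$, then for every uninfected $u$ the set $A \cup \{u\}$ has size at most $k$ and so is contained in some edge of $\K_n^{(k)}$; condition 2 then forces $u \in e$. Since $A \subseteq S$ and $V \setminus S$ are disjoint subsets of the $k$-element set $e$, this gives $a + (n-s) \leq k$, hence $s \geq n - k + a \geq n-k+1$, contradicting $s \leq n-k$. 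Consequently any $S_0$ with $|S_0| \leq n-k$ fails to be an infection set.

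The main subtlety is not computational but conceptual: one must recognize that completeness of the $k$-uniform hypergraph collapses condition 2 into the demand that $e$ contain every uninfected vertex whenever $|A| < k$, which is what pins both bounds to the same threshold $n-k+1$. Once this observation is made, the remainder is a short size count.
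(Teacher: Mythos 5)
Your proof is correct. Note that the paper does not prove this proposition at all --- it is quoted directly from Bergen et al.\ (Lemma 3.1 of \cite{infection18}) --- so there is no in-paper argument to compare against; your two-sided argument (a single infected vertex together with the $k-1$ uninfected vertices forms an edge satisfying condition 2, versus the count $a+(n-s)\leq k$ showing no infection step can add vertices while $s\leq n-k$) is a valid, self-contained proof of the cited result.
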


The next result is immediate as $N[v]= V(\K_n^{(k)})$ for any $v\in V(\K_n^{(k)})$.

\begin{prop}\label{prop:kcomplete}
$\gpi \lp\K_n^{(k)} \rp= \gamma_{P} \lp\K_n^{(k)} \rp=\gamma \lp\K_n^{(k)} \rp= 1$.
\end{prop}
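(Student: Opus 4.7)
The plan is to leverage the observation stated just above the proposition: for every $v \in V(\K_n^{(k)})$, the closed neighborhood satisfies $N[v] = V(\K_n^{(k)})$. This holds because any $(k-1)$-subset of $V(\K_n^{(k)}) \setminus \{v\}$ together with $v$ constitutes a $k$-edge, so every other vertex is a neighbor of $v$.

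First I would show $\gamma(\K_n^{(k)}) = 1$ directly from the definition. Choose any vertex $v$; then $V(\K_n^{(k)}) = N[v] = \bigcup_{d \in \{v\}} N[d]$, so $\{v\}$ is a dominating set, giving $\gamma(\K_n^{(k)}) \le 1$. Since $V(\K_n^{(k)})$ is nonempty (the hypergraph has at least one edge by our standing assumption), the empty set is not dominating, so $\gamma(\K_n^{(k)}) \ge 1$.

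Next I would establish the chain
\[
\gpi(\K_n^{(k)}) \;\le\; \gamma_P(\K_n^{(k)}) \;\le\; \gamma(\K_n^{(k)}).
\]
The first inequality is precisely Proposition \ref{prop:leqpower}. The second follows because any dominating set is automatically a power dominating set: the initial domination step (step a of Definition \ref{def:powdomprocess}) already observes all of $V(\K_n^{(k)})$, so no subsequent observation steps are needed. Combined with $\gamma(\K_n^{(k)}) = 1$, this yields $\gpi(\K_n^{(k)}) \le \gamma_P(\K_n^{(k)}) \le 1$. Since neither parameter can be $0$ (the empty set starts the process with nothing observed/infected and $V(\K_n^{(k)}) \neq \varnothing$), all three parameters equal $1$.

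There is no real obstacle here; the argument is essentially bookkeeping once the closed-neighborhood identity $N[v] = V(\K_n^{(k)})$ is noted. The only mild care needed is to cite the correct inequality between $\gpi$, $\gamma_P$, and $\gamma$, and to observe that the initial domination step of either process coincides with the dominating-set condition, so a size-one dominating set suffices for both propagation processes.
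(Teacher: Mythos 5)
Your proposal is correct and matches the paper's reasoning: the paper declares the result immediate from the identity $N[v]=V(\K_n^{(k)})$ for every vertex $v$, which is exactly the observation you build on. Your write-up simply makes explicit the bookkeeping (the chain $\gpi\leq\gamma_P\leq\gamma$ and the fact that none of these parameters can be zero) that the paper leaves implicit.
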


For a less trivial example of the potential gap between the infection number and the infectious power domination number, we consider the \emph{complete $k$-partite hypergraph}, $\K_{n_1,n_2,\ldots,n_k}^{(k)}$, which is the hypergraph that has its vertex set partitioned into $k$ disjoint parts $V_1,\ldots,V_k$ where $|V_i|=n_i$. The edge set is the set of all $k$-sets with exactly one element from each $V_i$. Note that $\K_{n_1,n_2,\ldots,n_k}^{(k)}$ is $k$-uniform by definition.

\begin{prop} {\rm\cite[Lemma 3.4]{infection18}}
$I\lp\K_{n_1,\ldots,n_k}^{(k)}\rp = n_1+n_2+\cdots+n_k -k$. 
\end{prop}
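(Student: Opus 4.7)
The plan is to establish matching upper and lower bounds, where $N:=n_1+\cdots+n_k$.

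For the upper bound, I would exhibit an explicit infection set of size $N-k$. Pick a distinguished vertex $v_i\in V_i$ for each $i$, and let $S_0=V\setminus\{v_1,\ldots,v_k\}$. Under the standing assumption that each $n_i\ge 2$, for each $i$ I can choose $A_i=\{u_j : j\ne i\}$ with $u_j\in V_j\setminus\{v_j\}$, so $A_i\subseteq S_0$. Since $A_i$ has exactly one vertex in each part other than $V_i$, the set $A_i\cup\{v\}$ extends to an edge only when $v\in V_i$, and the only uninfected vertex of $V_i$ is $v_i$. Thus $A_i$ satisfies the hypotheses of Definition \ref{def:infection} and infects the edge $A_i\cup\{v_i\}$, adding $v_i$ to the infected set. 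Repeating for $i=1,\ldots,k$ infects every vertex, so $I(\K_{n_1,\ldots,n_k}^{(k)})\le N-k$.

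For the lower bound, the main tool is the following monotonicity lemma: if at some stage a part $V_i$ contains at least two uninfected vertices, then after any single infection step $V_i$ still contains at least two uninfected vertices. To prove it, suppose $A\subseteq S$ infects an edge $e$, and let $I\subseteq\{1,\ldots,k\}$ index the parts meeting $A$. If $i\notin I$, then for every uninfected $v\in V_i$ the set $A\cup\{v\}$ can be completed to an edge (one vertex per remaining part), so the infection rule forces $v\in e$; but $|e\cap V_i|=1$, contradicting the hypothesis that $V_i$ has two or more uninfected vertices. Hence $i\in I$, which means the single vertex of $e\cap V_i$ already lies in $A\subseteq S$, so no new vertex of $V_i$ is infected by this step.

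Combining with pigeonhole finishes the argument: if $|S_0|<N-k$ then $|V\setminus S_0|>k$, so some $V_i$ initially has at least two uninfected vertices; by the lemma and induction on the number of infection steps, $V_i$ always retains at least two uninfected vertices, and $S_0$ is not an infection set. Thus $I(\K_{n_1,\ldots,n_k}^{(k)})\ge N-k$. The main obstacle is isolating and cleanly formulating the monotonicity lemma, in particular recognizing that the $k$-partite structure forces $i\in I$ for any part $V_i$ with two or more uninfected vertices; once this invariant is in place, both bounds follow quickly.
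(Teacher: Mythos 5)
The paper does not prove this statement; it quotes it as Lemma~3.4 of Bergen et al.\ \cite{infection18}, so there is no internal proof to compare against. Judged on its own, your argument is correct and is the natural one. The upper bound (infect everything except one distinguished vertex $v_i$ per part, then use witness sets $A_i$ consisting of one already-infected vertex from every part other than $V_i$) and the lower-bound invariant (a part with at least two uninfected vertices retains at least two through any infection step, because condition (2) of Definition~\ref{def:infection} forces the witness set to meet that part, and then the edge contributes no new vertex there) are both sound, and the pigeonhole conclusion follows.

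The one point to tighten is your ``standing assumption'' that every $n_i\ge 2$: the proposition as printed carries no such hypothesis. Your lower bound goes through regardless, but each $A_i$ requires a non-distinguished vertex in every part $V_j$ with $j\ne i$, which is unavailable when some $n_j=1$. The repair is short: letting $J$ be the set of indices with $n_j=1$, the set $A=\{u_j : j\notin J\}$ (one infected, non-distinguished vertex from each non-singleton part) is contained only in the edge $A\cup\{v_j : j\in J\}$ among edges meeting uninfected vertices, so it infects all the missing singleton-part vertices in one step, after which your $A_i$'s exist and the argument resumes. This works whenever at least one part is not a singleton; in the fully degenerate case $n_1=\cdots=n_k=1$ the formula reads $0$ while any infection number is at least $1$, so some nondegeneracy is implicitly assumed in the cited source as well. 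Worth a sentence in the write-up, but not a flaw in the main idea.
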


\begin{prop}\label{prop:unikpartite} For the complete $k$-partite hypergraph, we have the following:
\bea
\ds \gamma \lp\K_{n_1,\ldots,n_k}^{(k)}\rp &=& \left\{\begin{array}{cl} 
        1 & \ds \min_{1\leq \ell\leq k} (n_\ell) = 1 \\ 
        2 & \textup{otherwise} \end{array} \right. \\
&   \\
&   \\
\ds \gamma_P \lp\K_{n_1,\ldots,n_k}^{(k)}\rp &=& \left\{\begin{array}{cl} 
        1 & \ds \min_{1\leq \ell\leq k} (n_\ell) = 1 \\
        1 & \ds \min_{1\leq \ell\leq k} (n_\ell) = 2, k=2\\
        2 & \ds \min_{1\leq \ell\leq k} (n_\ell) = 2, k>2 \textup{ or } \min_{1\leq \ell\leq k}(n_\ell)\geq 3  \end{array} \right.\\
&   \\
&   \\
\ds \gpi \lp\K_{n_1,\ldots,n_k}^{(k)}\rp &=& \left\{\begin{array}{cl} 
        1 & \ds \min_{1\leq \ell\leq k} (n_\ell) \leq 2 \\ 
        2 & \textup{otherwise} \end{array} \right.. \\ 
\eea
\end{prop}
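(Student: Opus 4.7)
My strategy starts by pinning down the closed neighborhood of any vertex. If $v\in V_i$, then every edge containing $v$ picks exactly one vertex from each other part, so $N[v]=\{v\}\cup\bigcup_{j\ne i}V_j$; equivalently, $V\setminus N[v]=V_i\setminus\{v\}$. From this: a single vertex from a singleton part dominates $V$, while if every part has size at least two a single vertex always misses the other members of its own part, so $\gamma=1$ when $\min n_\ell=1$ and $\gamma=2$ otherwise (with any two vertices from distinct parts serving as the dominating pair). This disposes of the first formula and also supplies the upper bound $\gamma_P,\gpi\le 2$ in every case where $\min n_\ell\ge 2$.

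For $\gamma_P$, the case $\min n_\ell=1$ is immediate from $\gamma_P\le\gamma$. In the subcase $\min n_\ell=2$, $k=2$, take $S_0=\{v\}$ with $V_1=\{v,v'\}$; after the domination step the only unobserved vertex is $v'$, and any $u\in V_2$ has $v'$ as its unique unobserved neighbor contained in the edge $\{u,v'\}$ (a valid edge because $k=2$), so rule (b) of Definition~\ref{def:powdomprocess} fires. In the remaining subcases ($\min n_\ell=2$ with $k>2$, or $\min n_\ell\ge 3$), starting from any $S_0=\{v\}$ with $v\in V_i$ leaves $V_i\setminus\{v\}$ unobserved; for any observed $u\notin V_i$ the unobserved neighbors of $u$ are exactly $V_i\setminus\{v\}$, and these cannot be concentrated into a single edge through $u$ (when $n_i\ge 3$ such an edge would need to hold two distinct vertices from $V_i$, and when $n_i=2$ with $k>2$ the lone unobserved $v'$ sits in several distinct edges through $u$ since $\prod_{\ell\ne i,j}n_\ell\ge 2$). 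Hence rule (b) stalls and $\gamma_P\ge 2$, matched by the dominating pair.

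For $\gpi$, when $\min n_\ell\le 2$ I exhibit a singleton infectious power dominating set. Take $v\in V_i$ with $n_i\le 2$; after the domination step the uninfected set is either empty (done) or a single vertex $v'$. Choose $A=\{u_j:j\ne i\}$ with one $u_j\in V_j$ for each $j\ne i$; then $A$ lies in the edge $e=A\cup\{v'\}$ of $\K_{n_1,\ldots,n_k}^{(k)}$. The second infection condition asks that every uninfected $w$ with $A\cup\{w\}$ inside some edge belongs to $e$; as $v'$ is the only uninfected vertex and $v'\in e$, this holds, so $A$ infects $e$ and $\gpi=1$. The main obstacle is the remaining case $\min n_\ell\ge 3$, where $\gpi\ge 2$ must be forced. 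Fix $S_0=\{v\}$ with $v\in V_i$ and suppose, for contradiction, some nonempty $A\subseteq N[v]$ infects an edge $e$ containing a vertex $w\in V_i\setminus\{v\}$. If $v\in A$, then $A\subseteq e$ forces the $V_i$-slot of $e$ to be $v$, contradicting $w\in e\cap V_i$. If $v\notin A$, then $A\subseteq\bigcup_{j\ne i}V_j$ has at most one vertex per part, so for \emph{every} $w'\in V_i\setminus\{v\}$ the set $A\cup\{w'\}$ extends to a valid edge by filling the remaining parts; since $|V_i\setminus\{v\}|\ge 2$, distinct $w_1,w_2\in V_i\setminus\{v\}$ both qualify, but at most one can lie in $e$, violating condition~2 of Definition~\ref{def:infection}. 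Thus $\gpi\ge 2$, and combined with $\gpi\le\gamma=2$ we conclude $\gpi=2$.
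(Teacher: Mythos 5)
Your proof is correct and follows essentially the same route as the paper: the same case split on $\min n_\ell$, the same two-vertex dominating set, the same observation that the lone unobserved vertex lies in multiple edges through any observed vertex (so rule (b) stalls when $k>2$), and the same swap/extension argument showing condition 2 of the infection rule fails when $\min n_\ell\ge 3$. Your explicit handling of the subcase $v\in A$ is a minor added detail the paper leaves implicit, but the argument is the same.
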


\begin{proof}
If any $n_i=1$ then the sole vertex in $V_i$ is adjacent to every other vertex and so $\gpi\lp\K_{n_1,\ldots,n_k}^{(k)}\rp\leq \gamma_P\lp\K_{n_1,\ldots,n_k}^{(k)}\rp\leq \gamma\lp\K_{n_1,\ldots,n_k}^{(k)}\rp=1$. 

If all $n_\ell\geq 2$, then for any vertex $v_i\in V_i$, we see that $v_i$ has at least one non-neighbor and so $\gamma\lp\K_{n_1,\ldots,n_k}^{(k)}\rp\geq 2$. Consider $S_0=\{v,u\}$ where $v$ and $u$ are adjacent (in different parts). Then $v$ is adjacent to all non-neighbors of $u$ and $u$ is adjacent to all non-neighbors of $v$; therefore, $\gpi\lp\K_{n_1,\ldots,n_k}^{(k)}\rp\leq \gamma_P\lp\K_{n_1,\ldots,n_k}^{(k)}\rp\leq\gamma \lp \K_{n_1,\ldots,n_k}^{(k)} \rp =2.$

Next suppose that some $n_i = 2$ and $k=2$. Without loss of generality, $n_1=2$ with that $V_1=\{v_1,w_1\}$. Consider $S_0=\{v_1\}$. After the domination step, $w_1$ is the only unobserved vertex. Let $v_2\in V_2$. Then $\{v_2\}\cup \{w_1\} \in E\lp\K_{n_1,\ldots,n_k}^{(k)}\rp$ and so $\{v_2\}$ observes $\{w_1\}$. In this case, $\gamma_P\lp\K_{n_1,\ldots,n_k}^{(k)}\rp = 1$.

Let $ \min_{1\leq \ell\leq k} (n_\ell) = 2$ and $k>2$. Let $v_i\in V_i$ and $S_0=\{v_i\}$. After the domination step, only the non-neighbors of $v_i$ are unobserved; let $w_i$ be one of these non-neighbors. Let $v_j\in V_j$ with $j\neq i$ and  let $v_k,v_k' \in V_k$ with $k\neq i,j$. Then $\{v_j\}\cup \{v_i\}$ is contained in an edge $e$ with $\{v_i,v_j,v_k\}\in e$ and in an edge $e'$ with $\{v_i,v_j,v_k'\}\in e'$. Thus there is not a power dominating set of size 1 and so $2\leq \gamma_P\lp\K_{n_1,\ldots,n_k}^{(k)}\rp\leq\gamma \lp \K_{n_1,\ldots,n_k}^{(k)} \rp =2.$

If some $n_i=2$ let one of these two vertices be $S_0$. After the domination step, only the remaining vertex in $V_i$, say $v_i$, is uninfected. Let $e$ be any edge containing $v_i$ and consider $A=e\setminus\{v_i\}$. As $v_i$ is the only uninfected vertex, and $A\cup \{v_i\} = e$, no other edge contains $A\cup\{v_i\}$. Thus, $A$ infects $e$. Therefore, $\gpi\lp\K_{n_1,\ldots,n_k}^{(k)}\rp=1$.

Finally, if $\min_{1\leq \ell\leq k} (n_\ell) \geq 3$, then choosing one vertex $v_i\in V_i$ infects all of the neighbors of $v_i$ in the domination step and the uninfected vertices consist of the $n_i-1\geq 2$ non-neighbors of $v$. Let $w_i,w_i'$ be two of these non-neighbors.
Suppose $A$ is a set of infected vertices such that $A\cup \{w_i\} \subseteq e$. Then $e'=(e\setminus \{w_i\} )\cup \{w_i'\}$ is also an edge. However, $A\cup \{w_i'\} \subseteq e'$ and $w_i'\not\in e$. Thus the non-neighbors of $v_i$ cannot be infected. Hence $\gpi\lp \K_{n_1,\ldots,n_k}^{(k)} \rp \geq 2$. Therefore, $2\leq \gpi \lp \K_{n_1,\ldots,n_k}^{(k)} \rp \leq \gamma_P\lp\K_{n_1,\ldots,n_k}^{(k)}\rp\leq \gamma \lp \K_{n_1,\ldots,n_k}^{(k)} \rp =2. $
\end{proof}

Again we see that if the number of vertices is large and $k$ is small, the discrepancy between the infection number and the infectious power domination number may be large.


Both generalizations of power domination to hypergraphs reduce to the power domination problem for graphs when $\h$ is 2-uniform, i.e. $\h$ is a graph (Prop. 1.1 in \cite{infection18} and page 1097 in \cite{kpower15}). However, Chang and Roussel's definition focuses on allowing one vertex to observe others whereas infectious power domination utilizes the fact that there may be multiple observed vertices in an edge which can be used to observe the edge. Using multiple vertices to observe an edge may be more natural as a model for physical problems, as this represents using measurements from multiple sensors.

\section{General bounds}\label{sec:genbounds}

In this section, we give bounds for the power domination number and infectious power domination number in terms of the degrees of the vertices, the number of edges, the size of the edges, and the number of vertices. 

\begin{prop}\label{prop:degbound}
Let $\h$ be a connected hypergraph. If $\h$ has at least one vertex of degree at least $3$, then $\gpi(\h)\leq \gamma_{P}(\h) \leq \left| \left\{ v\in V(\h) : \deg(v)\geq 3 \right\}\right|$. If $\deg(v) \leq 2$ for all vertices $v$ of $\h$, then $\gpi(\h)=\gamma_{P}(\h)=1$.
\end{prop}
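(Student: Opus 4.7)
My plan is to split the proposition into its two cases. The inequality $\gpi(\h)\leq\gamma_P(\h)$ is already Proposition \ref{prop:leqpower}, so the task reduces to exhibiting small power dominating sets. For the first case I would take $S_0=\{v\in V(\h):\deg(v)\geq 3\}$ and verify that $S_0$ is a power dominating set; for the second case I would show that any single vertex is a power dominating set.

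For the first case, the verification proceeds by contradiction. Assume the set $U$ of vertices that remain unobserved at termination of the $1$-power domination process is nonempty. Since $S_0$ contains every vertex of degree at least three, every $u\in U$ satisfies $\deg(u)\leq 2$. By connectedness of $\h$, some observed vertex $w$ has an unobserved neighbor. The heart of the argument is a case analysis on how $w$ became observed. If $w\in S_0$, then $\deg(w)\geq 3$ and the domination step observed every neighbor of $w$, contradicting our choice of $w$. Otherwise $\deg(w)\leq 2$, and $w$ was observed either in the domination step through a common edge $e_v$ with some $v\in S_0$ (and $e_v\subseteq N[v]$ is therefore entirely observed), or during a prior observation step in which some $w_0$ observed all its unobserved neighbors along an edge $e_0\ni w$ (after which every vertex of $e_0$ is observed). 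In either sub-case one of $w$'s at most two incident edges is fully observed, so all unobserved neighbors of $w$ lie in its other incident edge, and an observation step at $w$ could still fire, contradicting termination.

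For the second case, combining $\gpi(\h)\leq\gamma_P(\h)$ with the trivial lower bound $\gpi(\h)\geq 1$ reduces the task to showing $\gamma_P(\h)\leq 1$. Pick any $v\in V(\h)$ and set $S_0=\{v\}$. After the domination step every edge incident to $v$ is fully contained in $N[v]$, hence fully observed, and the same case analysis as above carries through because every vertex has degree at most two. The main obstacle I anticipate is keeping the case analysis tight; the key engine is the observation that when the rule from step b fires along an edge $e_0$, the entire edge $e_0$ becomes observed, so any $w\in e_0$ automatically inherits one fully observed incident edge. Combined with $\deg(w)\leq 2$, this pins the unobserved neighbors of $w$ to a single other incident edge, which is exactly what lets the process take one more step.
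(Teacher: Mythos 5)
Your proposal is correct and follows essentially the same route as the paper: take $S_0$ to be the set of vertices of degree at least $3$ (or any single vertex in the second case) and use the fact that every remaining vertex has degree at most two, so that once observed it has one fully observed incident edge and its unobserved neighbors are confined to the single other edge, letting the observation step fire. Your contradiction/termination framing merely makes explicit the connectedness argument that the paper's forward-propagation phrasing leaves implicit.
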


\begin{proof}
First, assume that $\h$ has at least one vertex of degree at least $3$. Let $S_0$ be the set of vertices with degree at least 3. After the domination step, any remaining unobserved vertex has degree at most two. Moreover, each vertex in $S_1 = N(S_0)\setminus S_0$ has degree at most two. One of these two edges contains only observed vertices and so each vertex in $S_1$ can observe the precisely one edge containing observed vertices incident to it. Each of these newly observed vertices are now in at most one edge containing unobserved vertices and so can observe that remaining edge. This continues until the entire graph is observed.

On the other hand, if $\deg(v)\leq 2$ for all $v\in V(\h)$, then select one vertex. After the domination step, the entire graph will become observed in the same way as in the previous case.
\end{proof}

\begin{prop}\label{prop:edgebound}
For any connected hypergraph $\h$ with at least two edges, we have $$\gpi(\h)\leq \gamma_{P}(\h) \leq |E(\h)|-1.$$ This bound is tight.
\end{prop}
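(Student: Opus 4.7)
The plan is to produce a power dominating set of size at most $|E(\h)|-1$; the companion inequality $\gpi(\h)\le \gamma_P(\h)$ is already Proposition~\ref{prop:leqpower}. Fix an arbitrary edge $e^*\in E(\h)$, and for every other edge $e$ choose a single representative vertex $v_e\in e$. Put $S_0 = \{v_e : e\in E(\h)\setminus\{e^*\}\}$, so $|S_0|\le |E(\h)|-1$. The claim is that $S_0$ is a power dominating set of $\h$.

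The domination step observes $N[v_e]\supseteq e$ for every $e\neq e^*$, so in particular every such edge is fully observed, and every vertex of $e^*$ that also lies in some other edge is observed as well. Let $U := \{v\in e^* : v\notin e \text{ for every } e\neq e^*\}$ be the set of remaining (potentially unobserved) vertices. Since $\h$ is connected and contains at least two edges, $e^*$ must share at least one vertex with some other edge, for otherwise the vertices of $e^*$ would form a component disjoint from the rest of $\h$. Hence $U\neq e^*$, and we may select an observed vertex $w\in e^*\setminus U$. Every edge other than $e^*$ that is incident to $w$ was already fully observed during domination, so all unobserved neighbors of $w$ lie in the single edge $e^*$. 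The Chang--Roussel observation rule of Definition~\ref{def:powdomprocess} then lets $w$ observe the rest of $e^*$, and the process terminates with every vertex observed. Thus $\gamma_P(\h)\le |E(\h)|-1$.

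For tightness, any connected hypergraph $\h$ with exactly two edges forces $1\le \gamma_P(\h)\le |E(\h)|-1 = 1$; a concrete witness is the path $P_3$ with vertices $\{a,b,c\}$ and edges $\{a,b\}$, $\{b,c\}$, where $\gamma_P(P_3)=1=|E(P_3)|-1$.

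The only delicate step above is verifying $U\neq e^*$, and it is precisely here that both hypotheses (connectedness and $|E(\h)|\ge 2$) are used; without them the observed vertex $w\in e^*\setminus U$ needed to finish off $e^*$ might fail to exist.
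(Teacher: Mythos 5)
Your proposal is correct and follows essentially the same route as the paper: pick one vertex from each edge except one, observe everything but that edge in the domination step, and use connectedness to find an observed vertex of the leftover edge whose unobserved neighbors all lie in that single edge. You simply spell out the final observation step and the tightness example in more detail than the paper does.
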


\begin{proof}
If we select one vertex from each edge of $\h$ save one, then in the domination step we observe all but at most one edge. This edge is then the unique edge containing unobserved vertices and so can be observed via the observation step as the $\h$ is connected.

It follows that any connected hypergraph $\h$ with exactly two edges has $\gpi(\h)=1$ and so we see that the bound is tight. 
\end{proof}

The next upper bound is similar to Proposition 1.2 in \cite{infection18}.

\begin{prop}\label{prop:edgesizebound}
Let $\h$ be a nontrivial hypergraph on $n$ vertices with at least two edges and let $k$ be the size of the largest edge in $\h$. Then $\gpi(\h)\leq \gamma_{P}(\h) \leq n-k$. This bound is tight.
\end{prop}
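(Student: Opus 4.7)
The plan is to exhibit an explicit power dominating set of cardinality $n-k$; the first inequality is then immediate from Proposition \ref{prop:leqpower}. Fix an edge $e$ of maximum size $k$ and take $S_0 := V(\h)\setminus e$, which has the required size $n-k$.

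I would then analyze the 1-power domination process from $S_0$ in two stages. After the domination step, every vertex of $S_0$ is observed, together with every vertex of $e$ that has a neighbor in $V\setminus e$. Because $\h$ is reduced, a vertex $v\in e$ has a neighbor outside $e$ exactly when $v$ lies in some edge distinct from $e$. Consequently the set of still-unobserved vertices is
\[
U \;=\; \{\,v\in e : v \text{ lies in no edge other than } e\,\}\subseteq e.
\]
If $U=\varnothing$ we are already done.

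Otherwise, assuming $\h$ is connected (as in the surrounding Propositions \ref{prop:degbound} and \ref{prop:edgebound}), the presence of a second edge forces $e$ to share a vertex with some other edge, so there exists $u\in e\setminus U$; this $u$ is observed. The crucial claim is that every unobserved neighbor of $u$ lies inside the edge $e$ incident to $u$, so that a single observation step at $u$ observes all of $U$. To justify this, any neighbor $w$ of $u$ outside $e$ comes from some edge $e''\ni u$ with $e''\neq e$, and either $w\in S_0$ (hence observed in the domination step) or $w\in e\cap e''$; in the latter case $e''\not\subseteq e$ by reducedness, so $e''$ contains some vertex of $S_0$, making $w$ observed too. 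Thus the unobserved neighbors of $u$ all sit in $U\subseteq e$, and step b of Definition \ref{def:powdomprocess} observes $U$ in one move. I expect this inside/outside bookkeeping — distinguishing the two possible locations of a neighbor of $u$ and invoking reducedness to conclude the $e\cap e''$ case — to be the only real obstacle.

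For tightness, I would point to the hypergraph consisting of two edges of size $k$ that share all but one vertex of each: it has $n=k+1$ vertices and $\gpi=\gamma_P=1=n-k$. When $k=2$ this is simply the triangle, which also attains equality.
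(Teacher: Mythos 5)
Your proof is correct and follows essentially the same route as the paper: both take $S_0=V(\h)\setminus e$ for a maximum edge $e$, observe that after the domination step the only unobserved vertices lie in $e$ itself (using reducedness and connectivity), and finish with a single application of observation rule b at a vertex of $e$ meeting another edge. Your tightness example (two $k$-edges sharing $k-1$ vertices) is a minor variant of the paper's (one edge of size $n-1$ plus one edge of size $2$), and both verify $n-k=1$ with a single dominating vertex.
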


\begin{proof}
Let $e$ be the largest edge of $\h$. Let $S_0=V(\h)\setminus e$. As $\h$ is connected with at least two edges, at least one vertex, say $v$, in $e$ is adjacent to a vertex which is not in $e$, i.e. $v$ is adjacent to a vertex in $S_0$. Thus in the domination step $v$ becomes observed. Then all of the unobserved neighbors of $v$ are contained in $e$ and so they become observed in the observation step. 

Consider the hypergraph $\h$ consisting of $n$ vertices with two edges: one edge containing $n-1$ vertices and the other containing one vertex from the first edge and the remaining vertex. The vertex in the intersection of the two edges infects all of the vertices in the domination step. Thus $\gpi(\h)=1=n-(n-1)$.
\end{proof}

As power domination and infectious power domination consist of a domination step with the addition of the observation or infection step, we have the following  observation.

\begin{obs} \label{obs:gpileqdom}
For any hypergraph $\h$, $\gpi(\h)\leq \gamma_{P}(\h) \leq \gamma(\h)$.
\end{obs}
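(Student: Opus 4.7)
The first inequality $\gpi(\h) \leq \gamma_P(\h)$ is already established as Proposition \ref{prop:leqpower}, so the plan is just to verify $\gamma_P(\h) \leq \gamma(\h)$. The approach is to show that every dominating set is automatically a power dominating set, which forces the minimum-cardinality comparison.

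Concretely, I would let $D \subseteq V(\h)$ be a minimum dominating set, so $|D| = \gamma(\h)$, and take $S_0 = D$ as the candidate power dominating set. I would then run the 1-power domination process from Definition \ref{def:powdomprocess} with this initial set. By the definition of dominating set, for every $v \in V(\h) \setminus D$ there exists an edge $e \in E(\h)$ with $v \in e$ and $e \cap D \neq \varnothing$, so $v \in N[d]$ for some $d \in D$. Hence in the domination step (step a), every vertex of $\h$ is already observed, and no observation step (step b) is needed for $S_0$ to be a power dominating set.

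This immediately gives $\gamma_P(\h) \leq |D| = \gamma(\h)$. Combined with Proposition \ref{prop:leqpower}, we obtain the chain $\gpi(\h) \leq \gamma_P(\h) \leq \gamma(\h)$. There is no real obstacle here; the argument is essentially bookkeeping — the domination step of the power domination process is exactly the condition for being a dominating set, so dominating sets trivially qualify as power dominating sets, and the analogous remark explains why the observation is recorded rather than proved at length.
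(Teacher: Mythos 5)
Your argument is correct and matches the paper's own justification: the paper records this as an observation precisely because a dominating set already observes every vertex in the domination step, so it is automatically a power dominating set, and the first inequality is Proposition~\ref{prop:leqpower}. Nothing is missing.
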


Thus, we may utilize the following domination bound from \cite{henninglowenstein12}.

\begin{prop}\rm{\cite[Theorem 2]{henninglowenstein12}}  \label{prop:nover3bounddom}
If $\h$ is a hypergraph with all edges of size at least three and no isolated vertex then $\gamma(\h)\leq \frac{|V(\h)|}{3}$.
\end{prop}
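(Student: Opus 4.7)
The key observation is that the hypotheses force $|N[v]|\geq 3$ for every $v\in V(\h)$: since $v$ lies in some edge of size at least three, it has at least two distinct neighbors in that edge. The target is therefore a dominating set $D$ achieving an amortized ratio of at least three ``credited'' vertices of $V(\h)$ per member of $D$.

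The most natural plan I would try is a greedy construction. Repeatedly pick a vertex $v$ whose closed neighborhood contains at least three still-undominated vertices, include $v$ in $D$, and mark those vertices dominated. While such a $v$ exists, each added dominator accounts for three new vertices of $V(\h)$ and we stay within the $n/3$ budget. Equivalently, one may try to build a partition $V(\h)=V_1\du\cdots\du V_k$ with $|V_i|\geq 3$ and each $V_i$ contained in some closed neighborhood $N[d_i]$; then $D=\{d_1,\dots,d_k\}$ is a dominating set of size $k\leq n/3$.

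The hard part will be controlling the residue. After several greedy rounds, the remaining undominated vertices may each have fewer than three undominated closed-neighbors, and these leftovers threaten to break the $1{:}3$ ratio. A direct inductive attempt (pick $v$, delete $N[v]$, recurse on $\h[V\setminus N[v]]$) suffers the same problem: by the paper's definition of induced subhypergraph, edges can shrink to size two and previously non-isolated vertices can become isolated, so the inductive hypotheses fail to transfer unchanged. The real content of the proof is therefore a careful case analysis or local-exchange argument that resolves these degenerate residual configurations within the budget; for instance, by strengthening the inductive statement to allow edges of size two under a modified bound, or by choosing the next dominator adaptively so as to absorb nearby low-degree residue in a single move. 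Since the result is invoked as a black box from \cite{henninglowenstein12}, I would ultimately defer to the argument given there rather than reconstruct the full case analysis here.
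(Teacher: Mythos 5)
You should first note that the paper itself offers no proof of this proposition: it is imported verbatim as Theorem 2 of \cite{henninglowenstein12} and used as a black box, so there is no internal argument to compare yours against. Your deferral to that reference therefore matches the paper's own treatment exactly, and your diagnostic remarks are sound --- in particular, you are right that the naive induction (delete $N[v]$ and recurse) breaks because the induced subhypergraph can acquire edges of size two and newly isolated vertices, so the hypotheses do not transfer, and that the Henning--L\"owenstein argument has to be a strengthened induction with case analysis rather than a three-line greedy count.

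That said, judged as a proof your proposal has a genuine gap, and you name it yourself: the entire content of the theorem lies in ``controlling the residue,'' and you do not supply that control. The greedy step (pick $v$ with three undominated vertices in $N[v]$) gives the $1{:}3$ ratio only while such a $v$ exists; once every undominated vertex sees fewer than three undominated vertices in its closed neighborhood, nothing in your sketch bounds the cost of finishing, and the leftover vertices could in principle force $|D|$ above $|V(\h)|/3$. The observation that $|N[v]|\geq 3$ for every $v$ is necessary but nowhere near sufficient --- it holds equally in configurations where a careless greedy choice overshoots the budget. So the proposal is an accurate road map of where the difficulty lives, not a proof; if a self-contained argument were required here, the case analysis from \cite{henninglowenstein12} would have to be reproduced, and nothing in your sketch substitutes for it.
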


We immediately obtain the following corollary from Observation \ref{obs:gpileqdom} and Proposition \ref{prop:nover3bounddom}.

\begin{cor}\label{cor:nover3bound}
If $\h$ is a hypergraph with all edges of size at least three and no isolated vertex then $\gpi(\h)\leq \gamma_P(\h) \leq \frac{|V(\h)|}{3}$.
\end{cor}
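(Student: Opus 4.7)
The plan is very short: the corollary is the transitive combination of two already established facts, so no new ideas are needed. First I would invoke Observation \ref{obs:gpileqdom}, which gives the chain $\gpi(\h) \leq \gamma_P(\h) \leq \gamma(\h)$ for any hypergraph $\h$. The hypothesis that $\h$ has no isolated vertex (so that a dominating set exists) and that every edge has size at least three places $\h$ in the setting covered by Proposition \ref{prop:nover3bounddom}.

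Next I would apply Proposition \ref{prop:nover3bounddom} (Theorem 2 of \cite{henninglowenstein12}) to obtain $\gamma(\h) \leq |V(\h)|/3$. Concatenating this with the previous chain yields
\[
\gpi(\h) \leq \gamma_P(\h) \leq \gamma(\h) \leq \frac{|V(\h)|}{3},
\]
which is the desired inequality.

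There is no real obstacle here: the only things to verify are that the hypotheses of Proposition \ref{prop:nover3bounddom} match those of the corollary (they do, verbatim) and that Observation \ref{obs:gpileqdom} applies with no extra assumptions (it does, since it holds for every hypergraph). So the proof is essentially a two-step citation, and I would simply write it as such.
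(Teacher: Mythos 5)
Your proposal is correct and matches the paper exactly: the corollary is stated there as an immediate consequence of Observation \ref{obs:gpileqdom} combined with Proposition \ref{prop:nover3bounddom}, which is precisely the two-step citation you give.
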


Corollary \ref{cor:nover3bound} does not utilize the observation (or infection) step. For graphs, the well known domination upper bound of $\frac{|V(G)|}{2}$ was improved for power domination  to $\frac{|V(G)|}{3}$ in \cite{zhaokangchang06}. We conjecture the following similar result.

\begin{conj}\label{conj:noverfour}
For any connected hypergraph $\h$ on at least 4 vertices with $|e|\geq 3$ for all $e\in E(\h)$, then $\gpi(\h)\leq \gamma_{P}(\h)\leq \frac{|V(\h)|}{4}$.
\end{conj}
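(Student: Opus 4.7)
The plan is to mirror the Zhao--Kang--Chang argument giving the $n/3$ bound on connected graphs, using strong induction on $n = |V(\h)|$. Base cases for $4 \leq n \leq 7$ reduce to verifying $\gpi(\h) \leq 1$ for the finitely many connected reduced hypergraphs on so few vertices with all edges of size $\geq 3$, which can be handled directly. For the inductive step with $n \geq 8$, the target is to locate a \emph{reducible configuration}: a single vertex $u$ whose addition to an infectious power dominating set causes a set $H \subseteq V(\h)$ with $u \in H$ and $|H| \geq 4$ to become observed, such that the residual hypergraph admits, by induction, a power dominating set of size at most $(n - |H|)/4$, which combined with $\{u\}$ lifts to a power dominating set of $\h$ of size at most $n/4$.

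The cleanest reducible configurations combine the domination step with one subsequent propagation step. For example, if $u$ has a neighbor $w$ of degree exactly two whose second edge $e$ does not contain $u$, then placing $u$ in $S_0$ observes $N[u]$, after which $w$'s only unobserved neighbors lie in $e$, so $e$ is observed in the next step; this yields at least $|N[u]| + |e| - 1 \geq 3 + 3 - 1 = 5$ observed vertices per selected vertex. A similar gain arises whenever an edge of size $\geq 4$ is incident to a cut vertex of $\h$: placing any non-cut vertex of that edge in $S_0$ observes at least $4$ vertices immediately. The induction should proceed by repeatedly extracting such configurations until only a base-case remnant remains.

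The main obstacle will be the case in which no local reducible configuration exists---roughly, connected $3$-uniform hypergraphs in which every vertex has degree at least $3$, so that a single selected vertex observes only $|N[u]|$ vertices under domination and propagation stalls because every observed vertex retains unobserved neighbors in multiple edges. Such configurations are the analogs of the extremal graphs for the $n/3$ bound and likely require a global weighting or discharging argument that assigns a target weight of $4$ to each selected vertex, leveraging the size-$\geq 3$ hypothesis on edges to show this weight is attainable on average. A secondary technical obstacle is that deleting observed vertices from $\h$ can create edges of size $1$ or $2$ in the residual subhypergraph, violating the inductive hypothesis; this can likely be sidestepped by strengthening the inductive statement to permit a bounded number of small edges, or by replacing vertex deletion with a contraction that preserves the edge-size constraint.
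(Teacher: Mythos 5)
The statement you are addressing is presented in the paper only as a conjecture (Conjecture~\ref{conj:noverfour}); the paper contains no proof of it and explicitly lists it as an open question in the concluding remarks. Your submission is likewise not a proof but a strategy outline whose crux is left unresolved: you defer the entire hard case to ``a global weighting or discharging argument'' that you do not supply, you assert without verification that the base cases $4\leq n\leq 7$ (which require $\gamma_P(\h)=1$ for \emph{every} connected hypergraph on that many vertices with all edges of size at least $3$) ``can be handled directly,'' and you acknowledge but do not repair the fact that deleting observed vertices destroys the edge-size hypothesis needed to apply induction. A list of obstacles together with the claim that they can ``likely'' be overcome does not establish the bound.

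Beyond the incompleteness, the paper gives concrete evidence against the specific route you propose. Proposition~\ref{prop:n4lollipops} constructs the family $\lolli$ of connected $3$-uniform hypergraphs with $\gamma_P(\lolli_q^{(3)}) = q = |V(\lolli_q^{(3)})|/4$, i.e.\ achieving the conjectured bound with equality, in which \emph{every} minimum power dominating set $S_0$ contains a vertex with at most two $S_0$-private neighbors. The paper introduces this family precisely to show that the Zhao--Kang--Chang counting argument ``does not translate to hypergraphs'': a local charge of $4$ per selected vertex (the vertex plus three private neighbors) is unattainable vertex-by-vertex even on extremal examples, so any discharging scheme must move charge globally in a way you have not specified, and your reducible-configuration extraction must somehow still net four vertices per selection on hypergraphs like $\lolli_q^{(3)}$ where the natural local count yields only three. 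Until that mechanism is exhibited, the argument does not close, and the statement remains a conjecture.
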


The \emph{$X$-private neighborhood} of a vertex $v\in X$ is $\pn(v,X)=N(v)\setminus \bigcup_{x\in X\setminus\{v\}} N[x]$, a variant of the definition in \cite{zhaokangchang06}. The members of $\pn(v,X)$ are the \emph{$X$-private neighbors of $v$}. Zhao, Kang, and Chang's proof from \cite{zhaokangchang06} is a counting argument in which they find two $S_0$-private neighbors 
for each vertex in the power dominating set, giving the bound via the inequality $|V(G)|\geq |S_0|+2|S_0|$. 

However, this strategy does not translate to hypergraphs. Consider the hypergraph $\lolli_1^{(3)}$ shown in Figure \ref{fig:lollipop}. We will use $\lolli_1^{(3)}$ to build a family of hypergraphs which achieves the bound in Conjecture \ref{conj:noverfour}, but in which any minimum power dominating set contains a vertex with at most two private neighbors.

\begin{figure}[!hbt]\caption{ Shown is $\lolli_1^{(3)}$. The edges are $\{w,x,v\}$, $\{w,x,y\}$, and $\{w,x,z\}$. }\label{fig:lollipop} 
\begin{center}
\begin{tikzpicture}[scale=0.8]
    \node (v1) at (0,0) {};
    \node (v2) at (0,4) {};
    \node (v3) at (0,8) {};
    \node (v4) at (-2,6) {};
    \node (v5) at (2,6) {};
    \draw (0,8.5) arc (90:270:2.5);
    \draw (0,7.5) arc (90:270:1.5);
    \draw (0,7.5) arc (90:270:-0.5);
    \draw (0,3.5) arc (90:270:-0.5);
    \draw (0,3.5) arc (90:270:-2.5);
    \draw (0,4.5) arc (90:270:-1.5);
    \draw (0,8.5) arc (90:270:0.5);
    \draw (0,4.5) arc (90:270:0.5);
    \draw (-0.5,0) arc (0:180:-0.5) -- (0.5,8) -- (0.5,8) arc (0:180:0.5) -- (-0.5,0) ;
    \foreach \v in {1,2,...,5} {
        \fill (v\v) circle (0.1);
    }
    \fill (v1) circle (0.1) node [above] {$v$};
    \fill (v2) circle (0.1) node [below] {$x$};
    \fill (v3) circle (0.1) node [below] {$w$};
    \fill (v4) circle (0.1) node [below] {$y$};
    \fill (v5) circle (0.1) node [below] {$z$};
\end{tikzpicture}
\end{center}
\end{figure}
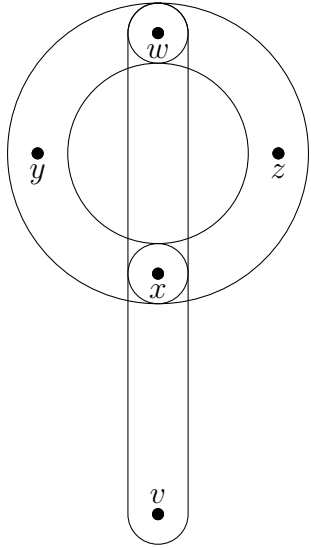

The hypergraph $\lolli_q^{(3)}$ is constructed by taking $q$ copies of $\lolli_1^{(3)}$, say $\lolli_{1,i}^{(3)}$, with vertex sets $\{v_i,x_i,y_i,z_i,w_i\}$ for each $1\leq i \leq q$. Identify $w_i$ with $v_{i+1}$ for $1\leq i \leq q-1$ and $w_q$ with $v_1$. Then 
\[ \lolli_q^{(3)} =\lp \bigcup_{1\leq i \leq q} \{x_i,y_i,z_i,w_i\}, \bigcup_{1\leq i \leq q} E\lp\lolli_{1,i}^{(3)}\rp\rp.\]
 For an example, $\lolli^{(3)}_3$ is shown in Figure \ref{fig:lollipop3}. We construct the family $\lolli = \ds\{ \lolli_q^{(3)} : q\geq 2 \ds\}$.

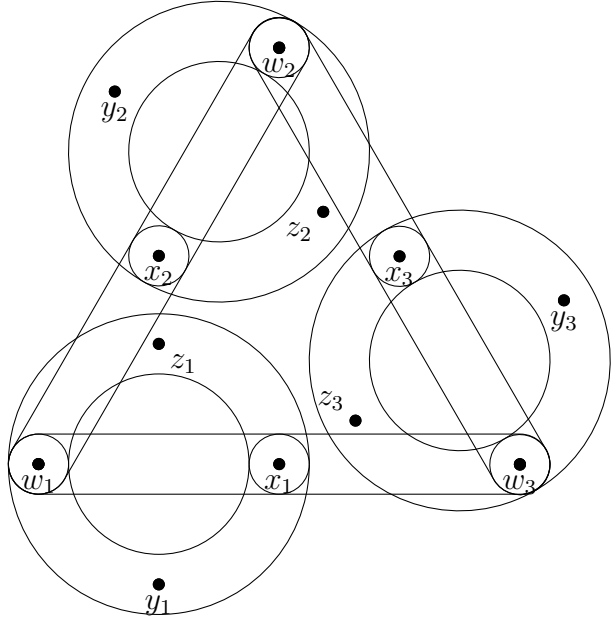
\begin{figure}[!ht]\caption{Shown is $\lolli^{(3)}_3$, with the identifications $w_1=v_2$, $w_2=v_3$, and $w_3=v_1$.}\label{fig:lollipop3} 
\begin{center}
\begin{tikzpicture}[scale=0.8]

\begin{scope}[rotate=90]
    \node (v1) at (0,0) {};
    \node (v2) at (0,4) {};
    \node (v3) at (0,8) {};
    \node (v4) at (-2,6) {};
    \node (v5) at (2,6) {};
    \draw (0,8.5) arc (90:270:2.5);
    \draw (0,7.5) arc (90:270:1.5);
    \draw (0,7.5) arc (90:270:-0.5);
    \draw (0,3.5) arc (90:270:-0.5);
    \draw (0,3.5) arc (90:270:-2.5);
    \draw (0,4.5) arc (90:270:-1.5);
    \draw (0,8.5) arc (90:270:0.5);
    \draw (0,4.5) arc (90:270:0.5);
    \draw (-0.5,0) arc (0:180:-0.5) -- (0.5,8) -- (0.5,8) arc (0:180:0.5) -- (-0.5,0) ;
    \foreach \v in {1,2,...,5} {
        \fill (v\v) circle (0.1);
    }
    \fill (v1) circle (0.1) node [above] {};
    \fill (v2) circle (0.1) node [below] {$x_1$};
    \fill (v3) circle (0.1) node [below] {$w_1$};
    \fill (v4) circle (0.1) node [below] {$y_1$};
    \fill (v5) circle (0.1) node [below right] {$z_1$};
\end{scope}

\begin{scope}[xshift=-4cm,yshift=6.92cm,rotate=-150]
    \node (v1) at (0,0) {};
    \node (v2) at (0,4) {};
    \node (v3) at (0,8) {};
    \node (v4) at (-2,6) {};
    \node (v5) at (2,6) {};
    \draw (0,8.5) arc (90:270:2.5);
    \draw (0,7.5) arc (90:270:1.5);
    \draw (0,7.5) arc (90:270:-0.5);
    \draw (0,3.5) arc (90:270:-0.5);
    \draw (0,3.5) arc (90:270:-2.5);
    \draw (0,4.5) arc (90:270:-1.5);
    \draw (0,8.5) arc (90:270:0.5);
    \draw (0,4.5) arc (90:270:0.5);
    \draw (-0.5,0) arc (0:180:-0.5) -- (0.5,8) -- (0.5,8) arc (0:180:0.5) -- (-0.5,0) ;
    \foreach \v in {1,2,...,5} {
        \fill (v\v) circle (0.1);
    }
    \fill (v1) circle (0.1) node [above] {};
    \fill (v2) circle (0.1) node [below] {$x_3$};
    \fill (v3) circle (0.1) node [below] {$w_3$};
    \fill (v4) circle (0.1) node [below] {$y_3$};
    \fill (v5) circle (0.1) node [above left] {$z_3$};
\end{scope}

\begin{scope}[xshift=-8cm,rotate=-30]
    \node (v1) at (0,0) {};
    \node (v2) at (0,4) {};
    \node (v3) at (0,8) {};
    \node (v4) at (-2,6) {};
    \node (v5) at (2,6) {};
    \draw (0,8.5) arc (90:270:2.5);
    \draw (0,7.5) arc (90:270:1.5);
    \draw (0,7.5) arc (90:270:-0.5);
    \draw (0,3.5) arc (90:270:-0.5);
    \draw (0,3.5) arc (90:270:-2.5);
    \draw (0,4.5) arc (90:270:-1.5);
    \draw (0,8.5) arc (90:270:0.5);
    \draw (0,4.5) arc (90:270:0.5);
    \draw (-0.5,0) arc (0:180:-0.5) -- (0.5,8) -- (0.5,8) arc (0:180:0.5) -- (-0.5,0) ;
    \foreach \v in {1,2,...,5} {
        \fill (v\v) circle (0.1);
    }
    \fill (v1) circle (0.1) node [above] {};
    \fill (v2) circle (0.1) node [below] {$x_2$};
    \fill (v3) circle (0.1) node [below] {$w_2$};
    \fill (v4) circle (0.1) node [below] {$y_2$};
    \fill (v5) circle (0.1) node [below left] {$z_2$};
\end{scope}

\end{tikzpicture}
\end{center}
\end{figure}

\begin{prop}\label{prop:n4lollipops}
The family of hypergraphs $\lolli$ satisfies \[\ds \gpi(\lolli_q^{(3)}) = \gamma_P(\lolli_q^{(3)}) = \gamma(\lolli_q^{(3)}) = q = \frac{|V(\lolli_q^{(3)})|}{4}\] for all $\lolli_q^{(3)}\in\lolli$. Any minimum power dominating set for a member of $\lolli$, $S_0$, contains a vertex with at most two $S_0$-private neighbors.
\end{prop}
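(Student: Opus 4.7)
My plan is to sandwich the three parameters between common lower and upper bounds of $q$. The upper bound is immediate: since $N[x_i]=\{v_i,w_i,x_i,y_i,z_i\}$ exhausts lollipop $i$, the set $\{x_1,\dots,x_q\}$ is a dominating set, so $\gamma(\lolli_q^{(3)})\leq q$, and Observation \ref{obs:gpileqdom} then gives $\gpi(\lolli_q^{(3)})\leq\gamma_P(\lolli_q^{(3)})\leq q$. The identification $w_i=v_{i+1}$ leaves exactly $4q$ distinct vertices, so $q=|V(\lolli_q^{(3)})|/4$.

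The crux is the matching lower bound $\gpi(\lolli_q^{(3)})\geq q$. Set $C_i=\{x_i,y_i,z_i,w_i\}$; these sets are pairwise disjoint and partition $V(\lolli_q^{(3)})$, so it suffices to show that any infectious power dominating set $S_0$ meets each $C_i$. I would argue by contradiction: if $S_0\cap C_i=\emptyset$, then since $N[y_i]=\{y_i,w_i,x_i\}\subseteq C_i$ and $N[z_i]=\{z_i,w_i,x_i\}\subseteq C_i$, neither leaf lies in $N[S_0]$ after the domination step. The obstruction is that $y_i$ and $z_i$ each have degree one and lie only in $\{w_i,x_i,y_i\}$ and $\{w_i,x_i,z_i\}$ respectively. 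Any infection adding $y_i$ must use some $A\subseteq\{w_i,x_i\}$ with $A$ infected and edge $e=\{w_i,x_i,y_i\}$; for each of $A\in\{\{w_i\},\{x_i\},\{w_i,x_i\}\}$ the extension $A\cup\{z_i\}$ lies in the edge $\{w_i,x_i,z_i\}\neq e$, so Definition \ref{def:infection} forces $z_i\in S$ before $y_i$ can be added. A symmetric argument forces $y_i\in S$ before $z_i$ can be added; the resulting deadlock contradicts that $S_0$ infectiously power dominates $\lolli_q^{(3)}$. Summing $|S_0\cap C_i|\geq 1$ over the partition yields $|S_0|\geq q$ and closes the equality chain.

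For the private-neighbor claim, the equality $|S_0|=q$ forces $|S_0\cap C_i|=1$ for every $i$; let $s_i$ denote this unique vertex. A short case analysis finishes. If $s_i\in\{y_i,z_i\}$ for some $i$, then $|N(s_i)|=2$ and $|\pn(s_i,S_0)|\leq 2$ is immediate. Otherwise every $s_i\in\{x_i,w_i\}$, and in this subcase each $s_j$ lies in the edge $\{w_j,x_j,v_j\}$, so both $v_j$ and $w_j$ belong to $N[s_j]$. Hence for every $i$ we have $v_i=w_{i-1}\in N[s_{i-1}]$ and $w_i=v_{i+1}\in N[s_{i+1}]$, removing both from any $\pn(s_i,S_0)$. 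If some $s_i=x_i$ this trims $N(x_i)=\{v_i,w_i,y_i,z_i\}$ down to $\{y_i,z_i\}$; if instead every $s_j=w_j$, the same reasoning gives $\pn(w_i,S_0)=\{y_i,z_i\}$ for each $i$. Either way some vertex of $S_0$ has at most two $S_0$-private neighbors, as claimed. The main obstacle is verifying the deadlock uniformly across the three possible choices of $A$, but the common requirement that $z_i\in S$ (respectively $y_i\in S$) emerges in every subcase.
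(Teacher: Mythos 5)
Your proposal is correct and follows essentially the same route as the paper: the dominating set $\{x_1,\dots,x_q\}$ for the upper bound, the partition into the sets $\{x_i,y_i,z_i,w_i\}$ together with the $y_i$/$z_i$ infection deadlock for the lower bound, and the same case split (leaf in $S_0$ versus all $w$/$x$ type vertices) for the private-neighbor claim. If anything, your explicit verification that every admissible $A\subseteq\{w_i,x_i\}$ is blocked by the edge $\{w_i,x_i,z_i\}$ spells out a step the paper states only tersely.
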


\begin{proof}
First we show $\gpi(\lolli_q^{(3)}) \geq q$. Suppose for contradiction that $\gpi(\lolli_q^{(3)}) < q$. Then there exists $j$ so that $S_0\cap \{x_j,y_j,z_j,w_j\} = \varnothing$. However, there is no way for $y_j$ to become infected as its only neighbors, $x_j$ and $w_j$, are also in an edge with the uninfected vertex $z_j$. This is a contradiction and so $\gamma_P(\lolli_q^{(3)}) \geq q$. This also implies that for all $1\leq i\leq q$, at least one of $\{x_i,y_i,z_i,w_i\}$ must be in any infectious power dominating set or any power dominating set. 

For equality, observe that $\{x_i : 1\leq i \leq q\}$ is a dominating set of $\lolli_q^{(3)}$ of size $q$. Therefore, $q \leq \gpi(\lolli_q^{(3)}) \leq \gamma_P(\lolli_q^{(3)}) \leq \gamma(\lolli_q^{(3)}) \leq q.$ 

Now consider a minimum infectious power dominating set of $\lolli_q^{(3)}$, $S_0$. We will show that $S_0$ contains a vertex with at most two private neighbors.

If for some $j$, $y_j\in S_0$, then as $y_j$ has only two neighbors, $S_0$ contains a vertex with at most two $S_0$-private neighbors. The same argument applies to $z_j$. Thus, we need only consider minimum power dominating sets consisting only of $w$ and $x$ type vertices and for all $1\leq i\leq q$, either $w_i$ or $x_i$ must be in $S_0$.

Let $x_i\in S_0$. Without loss of generality, $N(x_i)=\{y_i,z_i,w_i,w_{i-1}\}$. Either $w_{i+1}$ or $x_{i+1}$ must be in $S_0$ and $w_{i}$ is adjacent to both of these. Thus, $w_{i}\not\in \pn(x_i,S_0)$. Similarly, either $x_{i-1}$ or $w_{i-1}$ must be in $S_0$, so $w_{i-1}\not\in \pn(x_i,S_0)$. Hence $\pn(x_i, S_0)=\{y_i,z_i\}$. Finally, if $S_0=\{w_i : 1\leq i \leq p\}$, then $\pn(w_i,S_0) = \{y_i,z_i\}$ for all $i$, as $x_i$ is a common neighbor with $w_{i-1}$ and $x_{i+1}$ is a common neighbor with $w_{i+1}$. 

Therefore, $\lolli_q^{(3)}$ has at least one vertex in every minimum power dominating set which has at most two private neighbors.
\end{proof}

\section{Infectious power domination number for particular hypergraphs} \label{sec:determination} 



We have the following easy consequence of Observation \ref{obs:gpileqinf}.

\begin{obs}\label{obs:gpieq1}  For any hypergraph $\h$, if $I(\h)=1$, then $\gpi(\h)=1$. \end{obs}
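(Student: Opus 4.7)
The plan is to observe that this follows almost immediately from the inequality $\gpi(\h) \leq I(\h)$ recorded in Observation \ref{obs:gpileqinf}, together with the trivial lower bound $\gpi(\h) \geq 1$ that holds for any hypergraph in the class under consideration (recall that the paper restricts attention to reduced hypergraphs with at least one edge, hence at least one vertex, so the empty set cannot be an infectious power dominating set).

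Concretely, I would first invoke Observation \ref{obs:gpileqinf} to assert $\gpi(\h) \leq I(\h)$. Substituting the hypothesis $I(\h) = 1$ gives $\gpi(\h) \leq 1$. Then I would note that since $\h$ has at least one vertex, any infectious power dominating set must be nonempty, so $\gpi(\h) \geq 1$. Combining the two inequalities yields $\gpi(\h) = 1$.

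The main (and really only) obstacle here is making sure the lower bound $\gpi(\h) \geq 1$ is justified rather than taken for granted; this amounts to recalling the standing assumption in Section~\ref{sec:prelim} that every hypergraph considered is reduced and has at least one edge. Because the statement is labeled an \emph{observation} and is described in the text as an \emph{easy consequence} of Observation \ref{obs:gpileqinf}, I expect the write-up to be just one or two sentences, essentially the chain $1 \leq \gpi(\h) \leq I(\h) = 1$.
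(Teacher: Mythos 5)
Your proposal is correct and matches the paper's approach: the paper presents this as an immediate consequence of Observation~\ref{obs:gpileqinf}, i.e.\ the chain $1 \leq \gpi(\h) \leq I(\h) = 1$, with no further argument given. Your explicit justification of the lower bound $\gpi(\h)\geq 1$ via the standing assumption that $\h$ has at least one edge is a reasonable (if minor) addition.
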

Thus we obtain the several results directly from \cite{infection18}, after some definitions. A hypergraph is an \emph{interval hypergraph} if there is a linear ordering of the vertices so that each edge consists of consecutive vertices. A \emph{hypercycle} is a connected hypergraph with edge set $e_1,\ldots,e_\ell$ with $\ell \geq 4$ so that $e_i\cap e_j \neq \varnothing$ if and only if $i-j\equiv \pm 1 \mod \ell$, \cite{infection18}. 

\begin{obs}\label{obs:gpieq1examples}
\end{obs}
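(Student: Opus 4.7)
The plan is to obtain each item of the observation as an immediate corollary of Observation \ref{obs:gpieq1}, which says that $I(\h)=1$ forces $\gpi(\h)=1$. So the entire job reduces to locating, for each class of hypergraphs listed (interval hypergraphs, hypercycles, and any other families introduced just before the observation), a statement in \cite{infection18} that the infection number equals $1$, and then citing it.

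First I would verify from \cite{infection18} that if $\h$ is an interval hypergraph (with the linear ordering $v_1,\dots,v_n$ of vertices making every edge consecutive), then $I(\h)=1$: the single vertex $v_1$ is an infection set, because at each stage the set of infected vertices forms an initial segment $\{v_1,\dots,v_j\}$, the edge containing the smallest uninfected vertex $v_{j+1}$ has its already-infected portion $A$ satisfying the infection rule (any edge containing $A\cup\{v_{j+1}\}$ must, by consecutiveness, coincide with this edge up to $v_{j+1}$). Having recorded this, Observation \ref{obs:gpieq1} immediately yields $\gpi(\h)=1$ for every interval hypergraph.

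Next I would repeat the same citation-plus-application for hypercycles: \cite{infection18} shows $I(\h)=1$ for a hypercycle by exhibiting a single vertex in $e_1\cap e_2$ (or similar) whose infection propagates around the cycle because of the tight intersection pattern $e_i\cap e_j\neq\varnothing\iff i-j\equiv\pm 1\pmod\ell$; combining with Observation \ref{obs:gpieq1} gives $\gpi(\h)=1$. The same mechanical argument handles any remaining class (e.g.\ hyperpaths, or whatever other families the observation enumerates from \cite{infection18}).

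Since every part of the observation is just $I(\h)=1 \Rightarrow \gpi(\h)=1$, there is no real obstacle: the only thing to be careful about is pairing each bullet of the observation with the correct lemma number in \cite{infection18} and stating each class in the exact form used there (interval hypergraph, hypercycle, etc.). The proof itself can be a one-line appeal to Observation \ref{obs:gpieq1}, with the referenced results from \cite{infection18} cited inline.
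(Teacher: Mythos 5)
Your proposal matches the paper exactly: the observation is stated as an immediate consequence of Observation \ref{obs:gpieq1} ($I(\h)=1 \Rightarrow \gpi(\h)=1$) applied to the cited results of \cite{infection18}, each of which asserts that the infection number of the relevant class is $1$, and the paper supplies no further argument. The only detail to be careful about is that the hypercycle item carries the degree-$1$ restriction from \cite{infection18} (the paper removes it only in the subsequent proposition), but this does not affect the correctness of your reduction.
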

\ben
\item {\rm\cite[Prop. 2.1]{infection18}} \emph{If $E(\h)=\{V(\h)\}$, then $\gpi(\h)=1$.}
\item {\rm\cite[Prop 3.2]{infection18} } \emph{Let $\h$ be a $k$-hypergraph with $k\geq 3$. Then there exists a $k$-hypergraph $\h'$ such that $V(\h)\subseteq V(\h')$ and $E(\h) \subseteq E(\h')$ with $\gpi(\h)=1$.}
\item {\rm\cite[Lem. 4.3]{infection18}} \emph{If $\h$ is a connected interval hypergraph then $\gpi(\h)=1$. }
\item {\rm \cite[Prop. 4.5]{infection18}} \label{obs:gpieq1exhypercycle}\emph{ If $\h$ is a hypercycle with a vertex of degree 1, then $\gpi(\h)=1$.}
\een

In fact, Observation \ref{obs:gpieq1examples}.\ref{obs:gpieq1exhypercycle} is true without the restriction on vertex degrees, which follows from the next proposition.

A \emph{circular arc interval hypergraph} is a hypergraph $\h$ with $n$ vertices with a circular order of the vertices so that every edge is composed of consecutive vertices. Using this circular ordering of the vertices, the first end point  of each edge is unique. If $\h$ has $m$ edges, let the first end points be denoted by $v_1,v_2,\ldots, v_m$ with corresponding edges $e_1,\ldots,e_m$, in a similar way to \cite{hoghyp18}. When $\h$ is connected, we may choose an ordering so that $v_1, e_1, v_2, e_2, \cdots e_{m-1}, v_m$ forms a path. 


\begin{prop}
For any connected circular arc interval hypergraph $\h$, \[\gpi(\h)=\gamma_{P}(\h)=1.\]
\end{prop}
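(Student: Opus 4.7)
The plan is to prove the stronger statement $\gamma_P(\h) = 1$ by exhibiting the single vertex $S_0 = \{v_1\}$ as a power dominating set; combined with Proposition \ref{prop:leqpower} and the trivial lower bound $\gpi(\h) \geq 1$, this yields $\gpi(\h) = \gamma_P(\h) = 1$. The overall strategy is to show inductively that the edges $e_1, e_2, \ldots, e_m$ all become observed, in that order.

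After the domination step, the observed set equals $N[v_1] = \bigcup_{f \ni v_1} f$, so every edge containing $v_1$ is entirely observed; in particular $e_1$ is observed. I would then induct on $k$: assuming $e_1, \ldots, e_{k-1}$ are already observed, the path property places $v_k$ in $e_{k-1}$, so $v_k$ is observed, and I would apply the single-vertex observation rule of Definition \ref{def:powdomprocess} at $v_k$ to capture $e_k$.

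The crux is showing that every unobserved neighbor of $v_k$ lies inside the single edge $e_k$. For any edge $f$ with $v_k \in f$, I would split into two cases. If $v_1 \in f$, then $f \subseteq N[v_1]$ is already observed after the domination step and contributes no unobserved vertex. If instead $v_1 \notin f$, then the arc $f$ does not wrap past $v_1$; cutting the circle just counter-clockwise of $v_1$ realizes $f$ as an ordinary interval whose first endpoint $v_j$ satisfies $v_j \leq v_k$ in this linear order, forcing $j \leq k$ in the first-endpoint ordering of edges. For $j < k$ the inductive hypothesis gives that $e_j = f$ is already observed; the only remaining possibility is $f = e_k$, which is exactly what the single-vertex rule requires.

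The step I expect to warrant the most care is controlling long, wrapping edges: a priori there could be an edge $e_j$ with $j > k$ whose arc wraps past $v_1$ and still reaches $v_k$, putting unobserved vertices of $e_j$ lying outside $e_k$ into the neighborhood of $v_k$ and blocking the observation step. The case split above dispatches this cleanly because any such wrapping edge must contain $v_1$, hence is already fully observed from the domination step. Once this case is handled, the induction runs to $k = m$ and observes $V = e_1 \cup \cdots \cup e_m$ in full.
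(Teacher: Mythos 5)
Your proof is correct and takes essentially the same route as the paper's: seed at the first endpoint $v_1$, observe $e_1$ in the domination step, and then let each successive first endpoint $v_k$ (observed because it lies in $e_{k-1}$) observe $e_k$, with the wrapping-edge concern dispatched exactly as in the paper by noting that any such edge must contain $v_1$ and is therefore already observed. The only difference is presentational: you phrase the argument as an explicit induction with a clean case split, where the paper argues the $k=2$ step and says the process continues.
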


\begin{proof} 
 Let $S_0=\{v_1\}$, the left endpoint of $e_1$. After the domination step, each vertex of $e_1$ is observed. As $\h$ is connected, this means that $v_2$ is observed. Since $\h$ is a circular arc hypergraph, any edge containing $v_2$ but not containing $v_1$ must have a left endpoint that is after $v_1$. The first left endpoint after $v_1$ is $v_2$. As $v_2$ is the left endpoint of $e_2$, the only edge containing unobserved vertices and $v_2$ is $e_2$, because any other edge containing $v_2$ would also have to contain $v_1$ and so would have become observed in the domination step. Thus $v_2$ observes $e_2$. In a similar way, $v_3$ is in $e_2$ and so $v_3$ observes $e_3$. We continue this process until all edges are observed.
\end{proof}

A \emph{Berge cycle} in a hypergraph $\h$ is a sequence $C_m=(v_1,e_1,v_2,e_2,\ldots,v_m,e_m,v_1)$ in which the $v_i$ are distinct vertices and the $e_i$ are distinct edges. A \emph{hypertree} is a connected hypergraph which contains no Berge cycle. A hypergraph is \emph{linear} if distinct edges intersect in at most one vertex. Note that any hypertree is linear. To see this, consider if two edges $e_1$ and $e_2$ share vertices $v_1$ and $v_2$. Then $v_1 e_1 v_2 e_2 v_1$ is a Berge cycle.  A \emph{major vertex} is a vertex of degree at least 3 \cite{kpower15}. A \emph{spider} is a nonempty hypertree with at most one major vertex. A \emph{spider cover} of a hypertree $\T$ is a partition of $V(\T)$, $V_1,\ldots, V_\ell$, such that each subset induces a spider. The \emph{spider number} of a hypertree $\T$ is the minimum size of a spider cover of $\T$, denoted by $\spi(\T)$.

\begin{thm}{\rm \cite[Theorem 7]{kpower15}} \label{thm:powertrees}
For any hypertree $\T$, $\gamma_{P}(\T) = \spi(\T)$.
\end{thm}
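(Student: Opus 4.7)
The plan is to prove $\gamma_P(\T) = \spi(\T)$ by establishing both inequalities, adapting the classical graph-tree argument of Haynes et al.\ to hyperedges.

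For the upper bound $\gamma_P(\T) \leq \spi(\T)$, I would start with a minimum spider cover $V_1, \dots, V_\ell$ of $\T$ and choose from each induced spider $\T[V_i]$ a distinguished vertex $s_i$ — the unique major vertex if one exists, otherwise any vertex — and take $S_0 = \{s_1, \dots, s_\ell\}$. The claim is that $S_0$ power-dominates $\T$. After the domination step, every vertex in $N_\T[s_i]$ is observed. Any unobserved vertex $v$ then lies on a ``leg'' of some spider $V_i$, so $v$ has degree at most $2$ inside $\T[V_i]$. I would proceed by induction on the distance from $\{s_1, \dots, s_\ell\}$ to show that propagation observes every such $v$, mirroring the argument of Proposition \ref{prop:degbound}.

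For the lower bound $\gamma_P(\T) \geq \spi(\T)$, I would take a minimum power dominating set $S_0$ and fix a specific execution of the observation process. Each vertex $v \in V(\T)$ is assigned to a unique $s \in S_0$: vertices observed in the domination step are assigned via the unique hypertree-path to their closest PMU, and vertices newly observed via the observation rule inherit the assignment of the observing vertex. Let $V_s$ be the set of vertices assigned to $s$. I would then argue that each induced subhypergraph $\T[V_s]$ is a spider by showing that $s$ is the only possible major vertex: each observation step enlarges $V_s$ by a single edge's worth of vertices, so each new vertex has only one ``predecessor'' edge already in $V_s$ and contributes at most one further edge as propagation continues — giving degree at most $2$ in $\T[V_s]$ for every non-$s$ vertex. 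Uniqueness of paths in the hypertree (which follows from the absence of Berge cycles) rules out extra major vertices arising from distinct propagation routes meeting up.

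The main obstacle is the upper bound, specifically that the induced spider $\T[V_i]$ can conceal the true $\T$-degree of its vertices: a degree-$2$ vertex $v$ inside $\T[V_i]$ may be incident to $\T$-edges that cross into other spiders $V_j$, and those edges must be observed before $v$ can force its remaining intra-spider edge. This is where the linearity of hypertrees (noted in the excerpt as following from the hypertree assumption) is essential — a hyperedge meeting two spiders does so in a controlled way, so its observation status can be tracked across spider boundaries. A clean implementation would likely induct on $\ell = \spi(\T)$, peeling off a ``leaf-spider'' that contains a leaf of $\T$ and is observed last, and invoking the inductive hypothesis on the remainder.
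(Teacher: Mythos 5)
There is an important point of reference here: the paper does not prove Theorem \ref{thm:powertrees} at all --- it is quoted from Chang and Roussel \cite{kpower15} --- so there is no in-paper argument to compare yours against, and I can only judge the sketch on its own terms. Your overall architecture (two inequalities: a spider cover yields a power dominating set of the same size, and a power dominating set yields a spider cover of the same size via assigning each vertex to the PMU responsible for observing it) is the standard and correct strategy for this kind of statement.

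As written, though, there is a genuine gap in the upper bound, and you have located it yourself without closing it. Placing one PMU at the head $s_i$ of each spider $\T[V_i]$ does not obviously power-dominate: a vertex $v$ of degree at most $2$ in $\T[V_i]$ may be incident in $\T$ to edges meeting other parts $V_j$, and the observation rule then forbids $v$ from forcing its remaining intra-spider edge until all but one of its incident edges are already fully observed --- whether that ever happens depends on the global order of propagation. Your proposed fix, ``induct on $\ell$, peeling off a leaf-spider that is observed last,'' does not yet work as stated: (i) it is not established that some spider in the cover can always be observed last, i.e., that the dependency relation among spiders induced by crossing edges is acyclic --- this is exactly where Berge-acyclicity must be invoked, and it is the heart of the proof rather than a footnote; and (ii) deleting a spider from $\T$ may disconnect it, so $\spi$ of the remainder is not even defined under the paper's conventions (hypertrees are connected by definition), and a minimum spider cover of $\T$ need not restrict to minimum spider covers of the components. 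The lower bound sketch is closer to complete, but two facts that make it go through are missing: each vertex observes at most one edge during the entire process (after observing, it has no unobserved neighbors), and Berge-acyclicity forbids two neighbors of $s$ from lying in a common edge avoiding $s$ --- without the latter, a vertex dominated by $s$ could acquire degree $3$ in $\T[V_s]$ and become a second major vertex. Connectivity of each $\T[V_s]$ also needs an explicit argument before you may call it a spider.
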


As a direct result of Theorem \ref{thm:powertrees} and Proposition \ref{prop:leqpower}, we have the following proposition.

\begin{cor}
For any hypertree $\T$, $\gpi(\T) \leq\spi(\T)$.
\end{cor}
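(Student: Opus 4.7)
The plan is essentially to chain the two cited results together, since the corollary is advertised as a direct consequence of Theorem \ref{thm:powertrees} and Proposition \ref{prop:leqpower}. First I would invoke Proposition \ref{prop:leqpower}, which gives $\gpi(\h) \leq \gamma_P(\h)$ for any hypergraph $\h$, and specialize it to the hypertree $\T$ to obtain $\gpi(\T) \leq \gamma_P(\T)$. Then I would apply Theorem \ref{thm:powertrees}, which identifies $\gamma_P(\T)$ with the spider number $\spi(\T)$. Concatenating these two facts yields $\gpi(\T) \leq \gamma_P(\T) = \spi(\T)$, which is exactly the desired inequality.

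There is no genuine obstacle here: the corollary is a two-step composition of an inequality and an equality that have already been established in the paper. In particular, no properties of hypertrees beyond what is encoded in Theorem \ref{thm:powertrees} are needed, and no construction of an explicit infectious power dominating set is required. If one wanted to make the reasoning feel slightly more substantive, one could remark that any spider cover $V_1, \dots, V_\ell$ of $\T$ in fact yields (via the construction in Chang and Roussel's proof of Theorem \ref{thm:powertrees}) a power dominating set of size $\ell$, which by the observation that every power dominating set is also an infectious power dominating set, gives $\gpi(\T) \leq \ell$; taking the minimum over spider covers recovers the bound. But this detour is unnecessary for the statement as written.

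Hence the whole proof reduces to the single-line computation
\[
\gpi(\T) \;\leq\; \gamma_P(\T) \;=\; \spi(\T),
\]
with the first inequality supplied by Proposition \ref{prop:leqpower} and the equality by Theorem \ref{thm:powertrees}.
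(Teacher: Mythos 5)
Your proof is correct and matches the paper exactly: the corollary is stated there as a direct consequence of Proposition \ref{prop:leqpower} and Theorem \ref{thm:powertrees}, chained precisely as $\gpi(\T) \leq \gamma_P(\T) = \spi(\T)$. No further argument is needed.
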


For the infectious power domination number, this bound is not an equality as shown in Example \ref{ex:hypertree}.

\begin{figure}[!h]\caption{A hypertree $\T^{(3)}$ with $\gpi(\T^{(3)})<\gamma_{P}(\T^{(3)})$.}\label{fig:hypertree} 
\begin{center}
\begin{tikzpicture}
\foreach \a in {1,2,...,7}
    {\node[circle,fill=black,scale=0.5] (u\a) at (\a-1,0){};}
\foreach \a in {7,8}
    {\node[circle,fill=black,scale=0.5] (u\a) at (0,\a-6){};}
\foreach \a in {9,10}
    {\node[circle,fill=black,scale=0.5] (u\a) at (0,-\a+8){};} 
\foreach \a in {11,12}
    {\node[circle,fill=black,scale=0.5] (u\a) at (6,\a-10){};}
\foreach \a in {13,14}
    {\node[circle,fill=black,scale=0.5] (u\a) at (6,-\a+12){};}   
\foreach \a in {11,12}
    {\node[circle,fill=black,scale=0.5] (u\a) at (2,\a-10){};}  
\foreach \a in {11,12}
    {\node[circle,fill=black,scale=0.5] (u\a) at (4,\a-10){};}  
\foreach \a in {1,2,...,7}
    {\node[left] (u\a) at (\a-1,0){\a};}
\foreach \a in {8,9}
    {\node[left] (u\a) at (0,\a-7){\a};}
\foreach \a in {10,11}
    {\node[left] (u\a) at (0,-\a+9){\a};} 
\foreach \a in {16,17}
    {\node[left] (u\a) at (6,\a-15){\a};}
\foreach \a in {18,19}
    {\node[left] (u\a) at (6,-\a+17){\a};}   
\foreach \a in {12,13}
    {\node[left] (u\a) at (2,\a-11){\a};}  
\foreach \a in {14,15}
    {\node[left] (u\a) at (4,\a-13){\a};}  
\draw (1,0) ellipse (1.5cm and .75cm);
\draw (3,0) ellipse (1.5cm and .75cm);
\draw (5,0) ellipse (1.5cm and .75cm);
\draw (0,1.1) ellipse (.75cm and 1.6cm);
\draw (0,-1.1) ellipse (.75cm and 1.6cm);
\draw (2,1.1) ellipse (.75cm and 1.6cm);
\draw (4,1.1) ellipse (.75cm and 1.6cm);
\draw (6,1.1) ellipse (.75cm and 1.6cm);
\draw (6,-1.1) ellipse (.75cm and 1.6cm);
\end{tikzpicture}
\end{center}
\end{figure}

\begin{ex}\label{ex:hypertree}
For the hypertree $\T^{(3)}$ in Figure \ref{fig:hypertree}, $\gpi(\T^{(3)})=2 < 3 = \gamma_{P}(\T^{(3)})$.

We first show that there is no spider cover of $\T^{(3)}$ of size 2. The major vertices of $\T^{(3)}$ are $1,3,5,7$. Suppose for contradiction that we have a spider cover $V_1, V_2$, and without loss of generality, let $1\in V_1$.

First suppose that $1,3,5,7 \in V_1$. However, this means that at least two of \[\{8,9\},\{10,11\},\{12,13\},\{14,15\},\{16,17\},\{18,19\}\] are in $V_2$ and so we have a contradiction as $V_2$ must induce a connected hypergraph.

Next consider if $1,3,5\in V_1$ and $7\in V_2$.  However, then at least one of \[\{8,9\}, \{10,11\},\{12,13\},\{14,15\}\] is not in $V_1$. However, this means that $V_2$ is disconnected as $7\in V_2$. Ergo, we cannot have any three major vertices in either $V_1$ and similarly for $V_2$.

Thus we have at most two elements of $\{1,3,5,7\}$ in $V_1$. Note that $1,7\in V_1$ or $1,5\in V_1$  would imply that $3\in V_1$ as well because $V_1$ induces a connected hypergraph. Thus, we must have $1,3\in V_1$ and $5,7\in V_2$. If $4\in V_1$, then $12$ or one of $8,10$ must be in $V_2$, but this is a contradiction to the connectedness of the subhypergraph induced by $V_2$. Similarly, $4\not\in V_2$. Therefore, $\spi(\T^{3}) >2 $.

Observe that
\[\{\{1,2,8,9,10,11\}, \{3,4,5,12,13,14,15\}, \{6,7,16,17,18,19\}\}\]
is a spider cover of $\T^{(3)}$. Thus, $\spi (\T^{(3)})=3$. By Theorem \ref{thm:powertrees}, $\spi(\T^{(3)}) = \gamma_P(\T^{(3)}) = 3$.

We now consider possible infectious power dominating sets. By symmetry, for sets of size 1 we need only check $1, 2, 3, 4, 8$ and $12$.

\bit
\item $S_0=\{1\}$: $1$ infects $2,3,8,9,10,11$. Then vertex $3$ is the only infected vertex which has uninfected neighbors, however these neighbors occur in both edge $\{3,12,13\}$ and edge $\{3,4,5\}$, so no infection step can occur.
\item $S_0=\{2\}$: $2$ infects $1,3$. Then vertex $1$ is adjacent to uninfected vertices in both $\{1,8,9\}$ and $\{1,10,11\}$. Vertex $3$ is adjacent to uninfected vertices in both $\{3,4,5\}$ and $\{3,12,13\}$. No other subset of infected vertices is contained in an edge also containing uninfected vertices and so no infection step can occur.
\item $S_0=\{3\}$: $3$ infects $1,2,4,5,12,13$. However, $1$ is in both $\{1,8,9\}$ and $\{1,10,11\}$. Vertex $5$ is in both $\{5,14,15\}$ and $\{5,6,7\}$.
\item $S_0=\{4\}$ or $S_0=\{12\}$: Since $\{3\}$ is not an infectious power dominating set, neither is $\{4\}$ or $\{12\}$ as $N(4), N(12) \subset N(3)$. 
\item $S_0=\{8\}$: Since $\{1\}$ is not an infectious power dominating set, neither is $\{8\}$ as $N(8)\subset N(1)$. 
\eit
Thus there is no infectious power dominating set of size 1. Next consider $S_0=\{1,7\}$. After the domination step, $1, 2, 3, 5, 6, 7, 8, 9, 10, 11, 16, 17, 18,$ and $19$ are infected. Then $\{3,5\}$ infects $4$. Finally, $\{3\}$ infects $\{12,11\}$ and $\{5\}$ infects $\{14,15\}$. Thus $\gpi(\T^{(3)})=2$.

Therefore, we have $\gpi(\T^{(3)}) = 2 < \spi(\T^{(3)})=3$.

\end{ex}

\section{Hypergraph operations}\label{sec:operations}

\subsection{Edge/vertex removal}

Let $\h$ be a hypergraph with $e\in E(\h)$. The hypergraph $\h-e$ has vertex set $V(\h)$ and edge set $E(\h)\setminus \{e\}$. 

\begin{thm}\label{prop:edgeremoval}
Let $\h$ be a hypergraph with an edge $e$. Then $\gpi(\h) -1 \leq \gpi(\h - e) \leq \gpi(\h)+|e|-1$. These bounds are tight.
\end{thm}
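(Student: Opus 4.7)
The plan is to prove the two inequalities separately and exhibit tightness examples, with the arguments built on simulating the infection process between $\h$ and $\h-e$.

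For the upper bound $\gpi(\h-e)\leq\gpi(\h)+|e|-1$, I would fix a minimum infectious power dominating set $S_0$ of $\h$ and construct $S_0'=S_0\cup(e\setminus\{v\})$ for a suitable $v\in e$. If $S_0\cap e\neq\emptyset$ I choose $v\in S_0\cap e$, so $S_0'=S_0\cup e$ with $|S_0'|=|S_0|+|e|-|S_0\cap e|\leq|S_0|+|e|-1$; if $S_0\cap e=\emptyset$ I choose $v$ to be any vertex of $e$ that is infected at the earliest step of the $\h$-process starting from $S_0$, yielding $|S_0'|=|S_0|+|e|-1$. I then prove by induction on the steps of the $\h$-process that the corresponding infected set $T_k''$ in $\h-e$ starting from $S_0'$ always contains the infected set $T_k$ in $\h$ starting from $S_0$. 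The inductive step rests on two observations: when the $\h$-step uses an edge $f_k\neq e$, the same step is valid in $\h-e$ because the infection rule's obstruction clause ``$A\cup\{u\}\subseteq g$ for some edge $g$'' only has fewer potential witnessing edges once $e$ is removed; when the $\h$-step uses $f_k=e$, no work is needed in $\h-e$ because $e\setminus\{v\}\subseteq S_0'\subseteq T_0''$ and $v$ itself is infected---either via a non-$e$ neighbor in $S_0$ at the domination step (when $v\in N_\h[S_0]$, which under $S_0\cap e=\emptyset$ forces the adjacency to go through an edge other than $e$), or via the first $\h$-infection of an edge $f\neq e$ containing $v$ (this $f$ cannot be $e$ itself, since that would require a nonempty subset of the then-empty $T_k\cap e$).

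For the lower bound $\gpi(\h)-1\leq\gpi(\h-e)$, I would take any minimum ipd set $S_0'$ of $\h-e$, pick an arbitrary $v\in e$, and set $S_0=S_0'\cup\{v\}$, of size at most $\gpi(\h-e)+1$. Since $v\in e$, the domination step in $\h$ gives $e\subseteq N_\h[v]\subseteq T_0$, so all of $e$ is infected at step~$0$. I then mirror the $\h-e$ process inside $\h$: every infection step $A_k'$ infecting $f_k'$ that worked in $\h-e$ still works in $\h$, because the only potential new obstruction is a pair $(g,u)$ with $g=e$, $A_k'\cup\{u\}\subseteq e$, $u$ uninfected and $u\notin f_k'$, which would force $u\in e$ and contradict $e\subseteq T_0$.

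For tightness I would use the hypergraph $\h$ on $\{v_1,\dots,v_k,w_1,\dots,w_k\}$ with edges $e=\{v_1,\dots,v_k\}$ and $\{v_i,w_i\}$ for $i=1,\dots,k$: here $\gpi(\h)=1$ (the vertex $v_1$ dominates $e$ and each $\{v_i\}$ then infects $\{v_i,w_i\}$ since $\{v_i,w_j\}$ is contained in no edge when $j\neq i$), while $\gpi(\h-e)=k$ (the $k$ disjoint edges $\{v_i,w_i\}$ each require an initial vertex), realizing $\gpi(\h)+|e|-1$; for the lower bound I would take $\h=K_{3,3}$ with any edge removed, since $\gpi(K_{3,3})=2$ but $\gpi(K_{3,3}-e)=1$, realizing $\gpi(\h)-1$. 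The main obstacle is the subcase $S_0\cap e=\emptyset$ of the upper bound, where I must argue that the chosen $v$ is actually infected in $\h-e$; the technical heart is showing that when $T_0\cap e=\emptyset$ the first $\h$-infection of an $e$-vertex goes through an edge $f\neq e$, and that this step is reproducible in $\h-e$ step-for-step via the inductive hypothesis $T_k''\supseteq T_k$.
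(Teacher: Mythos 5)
Your proofs of both inequalities follow essentially the same route as the paper's: for the lower bound you augment an infectious power dominating set of $\h-e$ by one vertex of $e$, and for the upper bound you augment one of $\h$ by $e\setminus\{v\}$ where $v$ is the first vertex of $e$ to become infected; your write-up is more careful than the paper's in checking that $v$'s own infection cannot go through $e$ and that each infection step transfers, but the underlying idea is identical. The only real divergence is in the tightness witnesses: the paper uses complete $k$-partite hypergraphs with all parts of size at least $3$ for the lower bound (your $\K_{3,3}^{(2)}$ is the $k=2$ member of that family and checks out) and a connected linear interval hypergraph for the upper bound, whereas your ``one large edge plus $k$ pendant pairs'' hypergraph attains $\gpi(\h-e)=\gpi(\h)+|e|-1$ by splitting into $k$ single-edge components rather than by isolating $|e|-1$ vertices; both choices are valid.
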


\begin{proof}
For the lower bound, if we have an infectious power dominating set $\hat{S}$ for $\h-e$ then adding an edge may ruin uniqueness for infection. By adding one vertex of $e$ to $\hat{S}$, this new edge is infected in the domination step. Then $\hat{S}$ will infect the remainder of the graph as it did in $\h-e$. Thus we have $\gpi(\h) \leq \gpi(\h-e)+1$ and so $\gpi(\h)-1 \leq \gpi(\h -e)$. 

For the upper bound, consider an infectious power dominating set $S$ for $\h$. At some point in the infectious power domination process on $\h$, at least one vertex of $e$ must become infected in order for the rest of $e$ to become infected. Call this vertex $v$ and let $e'=e\setminus\{v\}$. When we remove edge $e$, then the  vertices in $e'$ may no longer be infected. Consequently, $S\cup e'$ is an infectious power dominating set of $\h - e$ and has at most $\gpi(\h)+|e|-1$ vertices. 

For tightness of the lower bound, consider $\K_{n_1,\ldots,n_k}^{(k)}$ with $n_1=3$ and $n_i\geq 3$ for all $i\neq 1$. By Proposition \ref{prop:unikpartite}, $\gpi\lp \K_{n_1,\ldots,n_k}^{(k)} \rp =2 $. Remove edge $e=\{v_1,\ldots,v_k\}$ with $v_i\in V_i$ and let $v_1$ have non-neighbors $a$ and $b$. Let $S_0=\{a\}$. In the domination step, every vertex except for $v_1$ and $b$ have been infected. 
Let $A=\{v_2,\ldots,v_k\}$ and $e=A\cup\{b\}$. As $A\cup \{v_1\}$ is not an edge, $A$ infects $b$. Then $v_1$ is the one remaining infected vertex and so becomes infected by any vertex adjacent to $v_1$. Therefore $\gpi(\K_{n_1,\ldots,n_k}^{(k)}-e)=1$.

To see the tightness of the upper bound, consider any connected linear interval hypergraph $\h$ with first edge $e$. By Observation \ref{obs:gpieq1examples} Part 3, $\gpi(\h)=1$. The hypergraph $\h - e$ has $|e|-1$ isolated vertices and the remaining vertices form a connected interval hypergraph. Thus any infectious power dominating set must contain the $|e|-1$ isolated vertices and the vertex that forms an infectious power dominating set for the remainder of the graph. Thus $\gpi(\h - e) = 1 + |e|-1 = \gpi(\h) +|e| -1$.

\end{proof}

For the power domination number, we see that the proofs of the upper and lower bounds in Proposition \ref{prop:edgeremoval} also apply. Thus we have the following.

\begin{cor}
Let $\h$ be a hypergraph with an edge $e$. Then $\gamma_{P}(\h) -1 \leq \gamma_{P}(\h - e) \leq \gamma_{P}(\h)+|e|-1$. 
\end{cor}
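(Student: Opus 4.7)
The plan is to show that both constructions used in the proof of Theorem \ref{prop:edgeremoval} depend only on generic features of the propagation process---a domination step followed by edge-based spreading---and transfer without modification to the 1-power domination rule of Definition \ref{def:powdomprocess}. Since that rule is the special case of the infection rule in which the propagating set $A$ is always a singleton, the task reduces to rereading each half of the earlier proof and verifying that each step remains valid under this restriction.

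For the lower bound, I would take a minimum power dominating set $\hat{S}$ of $\h - e$, choose any $v \in e$, and argue that $\hat{S} \cup \{v\}$ is a power dominating set of $\h$. The point is that the domination step in $\h$ places all of $e$ into the observed set via $v$, so from that moment on the unobserved-neighbor structure in $\h$ coincides with the one in $\h - e$. Every observation step that succeeds in $\h - e$ starting from $\hat{S}$ therefore succeeds in $\h$ starting from $\hat{S} \cup \{v\}$, yielding $\gamma_P(\h) \leq \gamma_P(\h - e) + 1$.

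For the upper bound, given a minimum power dominating set $S$ of $\h$, I would run the 1-power domination process on $\h$, let $v$ denote the first vertex of $e$ to become observed, set $e' = e \setminus \{v\}$, and claim $S \cup e'$ is a power dominating set of $\h - e$. The heart of the argument is a coupled simulation: by induction on the step index, the observed set in the process on $\h - e$ starting from $S \cup e'$ always contains the observed set in the process on $\h$ starting from $S$. For the inductive step, an observation in $\h$ in which a vertex $w$ uses an edge $g$ to observe its unobserved neighbors is either replicated in $\h - e$ when $g \neq e$ (since $w$'s unobserved-neighbor set in $\h - e$ is contained in the corresponding set in $\h$), or is vacuous in $\h - e$ when $g = e$ (since $e \subseteq S \cup e'$ is already observed at the outset).

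The main point to watch for is that removing $e$ shrinks the open neighborhood of every vertex in $e$, which could conceivably invalidate the use of some vertex of $S \cap e$. Choosing $v$ as the first vertex of $e$ observed in $\h$ neutralizes this concern: no vertex of $e \setminus \{v\}$ can lie in $S$ without being observed before $v$, and $v$ itself is included in $e \subseteq S \cup e'$ in any case. Beyond this check, both inequalities reduce to routine verifications paralleling the infectious case.
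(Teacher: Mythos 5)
Your proposal is correct and follows essentially the same route as the paper, which simply observes that the two constructions from Theorem \ref{prop:edgeremoval} (adding one vertex of $e$ to a set for $\h-e$, and adding $e$ minus its first-observed vertex to a set for $\h$) carry over verbatim to the singleton-$A$ case of the observation rule; your write-up just makes the coupled-simulation verification explicit. One small slip: the phrase ``$v$ itself is included in $e\subseteq S\cup e'$'' is only literally true when $v\in S$, so in the tie case one should take $v\in S\cap e$ whenever that intersection is nonempty; otherwise $v$ is first observed via an edge other than $e$ and your induction handles it.
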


We note that removing a vertex and its corresponding edges may drastically change the power domination number. Adding a dominating vertex (i.e., a vertex that is adjacent to every vertex in the graph) will lower the infectious power domination number to 1 regardless of the remaining graph structure. Similarly, removing such a vertex may drastically increase the infectious power domination number. 

\subsection{Linear sums}

The \emph{linear sum} of hypergraphs $\h_1,\h_2$ is $\h_1\star\h_2 = (V(\h_1)\cup V(\h_2), \{e_1\cup e_2 : e_1\in E(\h_1), e_2\in E(\h_2)\})$. Note that 
the linear sum of two 2-hypergraphs (i.e. two graphs) will yield a 4-hypergraph. As the vertex set is the union of the vertex sets of the input hypergraphs, we call this operation a linear sum and use the notation $\h_1\star\h_2$ instead of the term \emph{direct product} and notation $\h_1\times\h_2$ as used in \cite{infection18}.

\begin{thm}\label{thm:direct}
For any connected hypergraphs $\h_1$ and $\h_2$, $\gpi\lp\h_1\star\h_2\rp\leq \gamma_P (\h_1\star\h_2) \leq \gamma (\h_1\star\h_2)\leq 2$. Furthermore, if $\gpi(\h_1)=1$ or $\gpi(\h_2)=1$ then $\gpi(\h_1\star\h_2)=1$.
\end{thm}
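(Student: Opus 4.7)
The chain $\gpi(\h_1\star\h_2) \leq \gamma_P(\h_1\star\h_2) \leq \gamma(\h_1\star\h_2)$ is immediate from Observation \ref{obs:gpileqdom}, so the first statement reduces to proving $\gamma(\h_1\star\h_2) \leq 2$. To do this, I would pick any $v_1 \in V(\h_1)$ and $v_2 \in V(\h_2)$ and check that $\{v_1,v_2\}$ is a dominating set. For any $u \in V(\h_1)\setminus\{v_1\}$, connectedness of $\h_1$ gives an edge $f_1 \in E(\h_1)$ containing $u$; fixing any $e_2 \in E(\h_2)$ with $v_2 \in e_2$, the set $f_1 \cup e_2$ is an edge of $\h_1 \star \h_2$ containing both $u$ and $v_2$, so $u$ is dominated by $v_2$. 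The case $u \in V(\h_2)\setminus\{v_2\}$ is symmetric.

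For the second statement, suppose without loss of generality that $\gpi(\h_1)=1$ with infectious power dominating set $\{v\}$, $v\in V(\h_1)$. I claim that $\{v\}$ is also an infectious power dominating set for $\h_1 \star \h_2$. Every edge of $\h_1 \star \h_2$ incident to $v$ has the form $e_1 \cup e_2$ with $v \in e_1 \in E(\h_1)$ and $e_2 \in E(\h_2)$, so the domination step infects the set $N_{\h_1\star\h_2}[v] \supseteq N_{\h_1}[v] \cup V(\h_2)$, where the inclusion of $V(\h_2)$ uses that $\h_2$ is connected with at least one edge and hence has no isolated vertex.

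The key step is to simulate each infection step of the $\h_1$ process inside $\h_1 \star \h_2$. Taking $V(\h_1)$ and $V(\h_2)$ to be disjoint (as the linear sum requires), suppose a subset $A$ of currently infected vertices of $\h_1$ infects an edge $e_1 \in E(\h_1)$. I would fix any $e_2 \in E(\h_2)$ and argue by induction that $A \cup e_2$ infects $e_1 \cup e_2$ in $\h_1 \star \h_2$: by the induction hypothesis $A$ is infected and $e_2 \subseteq V(\h_2)$ is infected from the domination step, and clearly $A\cup e_2 \subseteq e_1\cup e_2$. For the uniqueness condition, suppose $A \cup e_2 \cup \{u\} \subseteq f_1 \cup f_2$ with $f_1 \in E(\h_1)$, $f_2 \in E(\h_2)$, and $u$ uninfected. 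Disjointness forces $e_2 \subseteq f_2$, and reducedness of $\h_2$ then forces $f_2 = e_2$. Since $V(\h_2)$ is already infected, $u \in V(\h_1)$ and hence $u \in f_1$ with $A \cup \{u\}\subseteq f_1$; because $A$ infects $e_1$ in $\h_1$, we get $u \in e_1 \subseteq e_1 \cup e_2$, as needed. Iterating, all of $V(\h_1)$ eventually joins $S$, and combined with $V(\h_2)$ already in $S$, this shows $\{v\}$ is an infectious power dominating set of $\h_1\star\h_2$.

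The main obstacle I foresee is precisely the use of reducedness of $\h_2$ to collapse $f_2$ to $e_2$; without this hypothesis, a strictly larger edge $f_2 \supsetneq e_2$ in $\h_2$ could in principle introduce an extra uninfected target $u$ and block the infection step from going through. Everything else is a routine induction on the timestep at which a vertex becomes infected in the original $\h_1$ process, and the domination-step computation is straightforward from the definition of $\h_1\star\h_2$.
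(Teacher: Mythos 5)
Your proposal is correct and follows essentially the same route as the paper: the cross-domination argument with $\{v_1,v_2\}$ for $\gamma(\h_1\star\h_2)\leq 2$, and simulating each infection step of $\h_1$ by the set $A\cup e_2$ in the product. The reducedness point you flag as the main obstacle is handled by the paper's standing assumption that all hypergraphs are reduced (the paper's own write-up silently uses this when it takes the second component of the containing edge to be $e_2$), so your version just makes that step explicit.
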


\begin{proof}
Take $v_1\in V(\h_1)$ and $v_2\in V(\h_2)$ and let $S_0=\{v_1,v_2\}$.  As each hypergraph is connected, $v_1\in e_1 \in E(\h_1)$ and for any vertex $w_2\in V(\h_2)$, there exists $f_2\in E(\h_2)$ such that $w_2\in f_2$. Then $v_1,w_2 \in e_1\cup f_2 \in E(\h_1\star \h_2)$ and so $w_2$ is adjacent to $v_1$. Similarly, we see that any vertex $w_1\in V(\h_1)$ is adjacent to $v_2$. Therefore every vertex in $\h_1\star \h_2$ is observed in the domination step and so $\gpi(\h_1\star\h_2)\leq \gamma_P (\h_1\star\h_2)\leq \gamma (\h_1\star\h_2) \leq 2$.

Let $\gpi(\h_1)=1$ with $\{v_1\}$ being an infectious power dominating set. We will show that $\{v_1\}$ is also an infectious power dominating set in $\h_1\star\h_2$. In the domination step, all vertices of $\h_2$ are infected as are the original neighbors of $v_1$ from $\h_1$. Suppose that $A\subseteq V(\h_1)$ infected edge $e_1 \in E(\h_1)$ during the infectious power domination process for $\h_1$. This means that for any uninfected vertex $w_1 \in V(\h_1)$, $A\cup \{w_1\} \subseteq e_1'\in E(\h_1)$ implies that $w_1\in e_1$. Let $e_2$ be any edge of $\h_2$. Consider $B=A\cup e_2$. Then for any uninfected $w_1$ in $\h_1\star \h_2$, if $B \cup\{w_1\} \subseteq e'\cup e_2$ then $A\cup \{w_1\} \subseteq e'$ and so $w_1\in e_1$. Thus $w_1\in e_1\cup e_2$ and so $B$ infects $e_1\cup e_2$. Continuing in this way, using the infection steps as in $\h_1$ but with the addition of the vertices in $e_2$,  every vertex of $\h_1$ becomes infected. Therefore, $\gpi(\h_1\star\h_2)=1$.\end{proof}



\begin{cor} 
For any hypergraph $\h_1$ with $\gpi(\h_1)=1$ and any hypergraphs $\h_2,\ldots,\h_\ell$, we have $\gpi\lp \h_1\star\h_2\cdots\star\h_\ell\rp=1$.
\end{cor}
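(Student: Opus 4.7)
The plan is a straightforward induction on $\ell$, bootstrapping from Theorem \ref{thm:direct}.

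First I would verify that the linear sum is associative: unwinding the definition, the edges of $(\h_1 \star \h_2) \star \h_3$ are exactly the sets $(e_1 \cup e_2) \cup e_3$ with $e_i \in E(\h_i)$, and these coincide with the edges $e_1 \cup (e_2 \cup e_3)$ of $\h_1 \star (\h_2 \star \h_3)$, while the vertex sets trivially agree. Hence $\h_1 \star \cdots \star \h_\ell$ is unambiguously defined, and in particular equals $(\h_1 \star \cdots \star \h_{\ell-1}) \star \h_\ell$.

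The base case $\ell = 2$ is precisely the second assertion of Theorem \ref{thm:direct}. For the inductive step, set $\h' = \h_1 \star \cdots \star \h_{\ell-1}$. By the inductive hypothesis applied to $\h_1$ together with $\h_2,\ldots,\h_{\ell-1}$, we have $\gpi(\h') = 1$. Applying Theorem \ref{thm:direct} to the pair $\h'$ and $\h_\ell$ then yields $\gpi(\h' \star \h_\ell) = 1$, and by associativity this equals $\gpi(\h_1 \star \cdots \star \h_\ell)$, completing the induction.

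The one point requiring care is that Theorem \ref{thm:direct} requires both operands to be connected, so I would verify that $\h'$ is connected at each step. In any linear sum $\G_1 \star \G_2$ (with both $\G_i$ having at least one edge, as assumed throughout the paper), every edge of the product is of the form $e_1 \cup e_2$ with $e_i \in E(\G_i)$ and therefore contains vertices from both sides. Consequently any non-isolated vertex of $\G_1$ lies in a common edge with any non-isolated vertex of $\G_2$, and a path in $\G_1$ lifts to $\G_1 \star \G_2$ by padding each of its edges with a fixed edge of $\G_2$. Thus connectedness is inherited by the linear sum, so each partial product $\h_1 \star \cdots \star \h_i$ is connected, and the induction goes through without obstruction.
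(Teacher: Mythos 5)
Your proof is correct and matches the paper's intended argument: the paper states this corollary without proof as an immediate iteration of Theorem \ref{thm:direct}, which is exactly your induction. Your added checks of associativity and of connectedness being preserved under $\star$ are the right details to verify (the paper leaves them implicit, and indeed tacitly assumes the $\h_i$ are connected as in the theorem's hypothesis).
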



Theorem \ref{thm:direct} shows that the infectious power domination number cannot increase for linear sums. In particular, if $\h_1$ or $\h_2$ has infection power domination number 1, the linear sum must also have infection power domination number 1. This is a stark contrast with the following result for the infection number.

\begin{prop}{\rm \cite[Proposition 6.5]{infection18}}
If $\h_1$ and $\h_2$ are both hypergraphs with more than one edge and $I(\h_1)=I(\h_2)=1$ then $I(\h_1\star\h_2)=2$.
\end{prop}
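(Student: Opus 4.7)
The plan is to prove the two inequalities $I(\h_1\star\h_2)\le 2$ and $I(\h_1\star\h_2)\ge 2$ separately. A crucial preliminary observation is that if $\{v_i\}$ is an infection set for $\h_i$, then $v_i$ must lie in a unique edge $e_i^*$ of $\h_i$: the very first infection step must start from $A=\{v_i\}$, and the infecting edge would have to contain every neighbor of $v_i$, hence every edge incident to $v_i$, so by reducedness $v_i$ has degree one. I will use $e_1^*$ and $e_2^*$ throughout.

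For the upper bound, I would take the infection sets $\{v_1\}$ and $\{v_2\}$ and argue that $\{v_1,v_2\}$ infects $\h_1\star\h_2$. The first step uses $A=\{v_1,v_2\}$: the unique edge of the product containing both $v_1$ and $v_2$ is $e_1^*\cup e_2^*$, so any uninfected vertex $w$ for which $\{v_1,v_2,w\}$ sits inside a product edge must already lie in $e_1^*\cup e_2^*$, and this entire edge becomes infected. For the remainder I would induct, emulating each $\h_i$-infection step inside the product. Concretely, if in $\h_1$ the set $A_1$ infects an edge $e_1$, then in the product $A_1\cup e_2^*$ should infect $e_1\cup e_2^*$. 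The two conditions to check are: an uninfected $w\in V(\h_1)$ lying in a product edge with $A_1\cup e_2^*$ lies in an $\h_1$-edge with $A_1$, hence in $e_1$ by the original infection rule in $\h_1$; and an uninfected $w\in V(\h_2)$ lying with $A_1\cup e_2^*$ in an edge $f_1\cup f_2$ forces $e_2^*\cup\{w\}\subseteq f_2$, and since $v_2$ lies in only one edge of $\h_2$ we must have $f_2=e_2^*$, giving $w\in e_2^*$. Alternating the emulation of the infection sequences of $\h_1$ and $\h_2$ eventually covers $V(\h_1)\cup V(\h_2)$.

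For the lower bound, I would suppose for contradiction that some singleton $\{w\}$ is an infection set of $\h_1\star\h_2$ and, without loss of generality, $w\in V(\h_1)$. The first infection step must use $A=\{w\}$, and the edge $e_1\cup e_2$ it infects must contain every product-neighbor of $w$. Now $I(\h_2)=1$ together with $|E(\h_2)|>1$ forces $\h_2$ to be connected with no isolated vertex, so every vertex of $V(\h_2)$ is a neighbor of $w$ in the product (pair any $\h_1$-edge through $w$ with any $\h_2$-edge through the target vertex). Hence $e_2\supseteq V(\h_2)$, forcing $e_2=V(\h_2)$, which contains every other edge of $\h_2$ and contradicts reducedness combined with $|E(\h_2)|>1$. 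The main obstacle is getting the emulation step in the upper bound precisely right: the worry is that edges of $\h_2$ other than $e_2^*$ might create extra witnesses that obstruct the product's infection rule, and it is exactly the degree-one property of $v_2$ in $\h_2$ that rules these out.
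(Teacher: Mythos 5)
The paper does not prove this proposition: it is quoted verbatim from Bergen et al.\ (Proposition~6.5 of \cite{infection18}), so there is no in-paper argument to compare against. Judged on its own, your proof is correct and complete in its essentials. The key preliminary observation --- that a singleton infection set $\{v_i\}$ forces its first infecting edge to equal $N[v_i]$, whence reducedness gives $\deg(v_i)=1$ --- is exactly what makes both halves work: it pins down the unique product edge $e_1^*\cup e_2^*$ through $\{v_1,v_2\}$ for the first step of the upper bound, and it is what lets the emulation step conclude $f_2=e_2^*$ so that edges of $\h_2$ other than $e_2^*$ cannot produce spurious witnesses. The lower bound via ``$e_2=V(\h_2)$ contradicts reducedness when $|E(\h_2)|>1$'' is also sound. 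Two small points you leave implicit and should state: the vertex sets $V(\h_1)$ and $V(\h_2)$ are taken to be disjoint (otherwise ``the unique product edge containing $v_1$ and $v_2$'' and the case split $w\in V(\h_1)$ versus $w\in V(\h_2)$ are not clean), and in the lower bound the vertex $w$ itself is non-isolated in $\h_1$ --- which follows from $I(\h_1)=1$ and $|E(\h_1)|>1$ by the same argument you give for $\h_2$, and is needed to produce a product edge through $w$ at all. Neither is a genuine gap.
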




\subsection{Cartesian products}

For an edge $e=\{w_1,\ldots,w_k\}$ and a vertex $v$ define $e\times v = \{(w_1,v),\ldots,(w_k,v)\}$ and similarly $v\times e = \{(v,w_1),\ldots,(v,w_k)\}$ as in \cite{infection18}. The \emph{Cartesian product} of two hypergraphs $\h_1$ and $\h_2$ is denoted $\h_1\square\h_2$, has vertex set  $ V(\h_1)\times V(\h_2)$, and has edge set 
\[\{e_1\times v_2 : e_1\in E(\h_1), v_2\in V(\h_2)\}\cup\{v_1\times e_2 : v_1\in V(\h_1), e_2\in E(\h_2)\}.\]

The infectious power domination number of the Cartesian product of two graphs may be greater than the infectious power domination number of either graph.  In Proposition \ref{prop:completecartesian} we see that the infectious power domination number of a $k$-complete hypergraph with a $\ell$-complete hypergraph is one such example. Recall from Proposition \ref{prop:kcomplete} that $\gpi(K_n^{(k)})=1$.

\begin{prop}\label{prop:completecartesian} Let $n\leq m$ with $3\leq k \leq n-1$ and $3\leq \ell \leq m-1$. Then
$\gpi\lp\K_n^{(k)} \square \K_m^{(\ell)}\rp = n-1$.
\end{prop}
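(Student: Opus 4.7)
The plan is to prove the equality in two parts: an upper bound $\gpi \leq n-1$ by explicit construction, and a matching lower bound $\gpi \geq n-1$ via a structural obstruction on the infection rule.

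For the upper bound, I would take $S_0 = \{(i,u) : 1 \leq i \leq n-1\}$ for any fixed vertex $u$ of $\K_m^{(\ell)}$. In the Cartesian product, the closed neighborhood of $(i,u)$ is exactly the union of row $i$ (vertices with first coordinate $i$) and the $u$-column (vertices with second coordinate $u$), since the edges through $(i,u)$ are the $k$-subsets of the $u$-column containing $(i,u)$ together with the $\ell$-subsets of row $i$ containing $(i,u)$. So after the domination step the uninfected vertices are exactly $\{(n,w) : w \neq u\}$. For each such $w$, set $A = \{(1,w)\}$; among uninfected vertices, the only $v$ for which $A \cup \{v\}$ sits inside some edge is $v = (n,w)$ itself, because any other uninfected vertex $(n,w')$ shares neither row nor column with $(1,w)$. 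Hence by Definition \ref{def:infection}, $A$ infects any $k$-subset $e$ of the $w$-column containing both $(1,w)$ and $(n,w)$ (which exists since $n \geq k+1$), so $(n,w)$ is added to $S$. Iterating over all $w \neq u$ completes the infection.

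For the lower bound, consider an infectious power dominating set $S_0$, and let $R$ and $C$ be the sets of first and second coordinates used by its vertices. After the domination step the infected set is $(R \times V(\K_m^{(\ell)})) \cup (V(\K_n^{(k)}) \times C)$, and the uninfected vertices form the rectangle $R' \times C'$ with $R' = V(\K_n^{(k)}) \setminus R$ and $C' = V(\K_m^{(\ell)}) \setminus C$. Because every edge of the Cartesian product lies in a single column or a single row, attempting to infect a column-$c$ edge with $c \in C'$ forces the infecting set $A$ into $R \times \{c\}$ and forces $R' \times \{c\} \subseteq e$; since $|e| = k$ and $|A| \geq 1$, this gives $|R'| \leq k-1$, i.e., $|R| \geq n-k+1$. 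A symmetric analysis for row-infections gives $|C| \geq m-\ell+1$. The plan is then to track how these thresholds evolve through the entire infection process (column infections only enlarge $C$, row infections only enlarge $R$) and to argue that an initial set with $|S_0| < n-1$ leaves too many rows and columns initially missed to ever satisfy the needed thresholds at every stage.

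The main obstacle is the lower bound. The first-step obstruction just described immediately yields only $|S_0| \geq \min(n-k+1, m-\ell+1)$, which can be strictly smaller than $n-1$, so the heart of the proof lies in strengthening this to $n-1$. I would attempt to do this by describing exactly how $|R|$ and $|C|$ can grow jointly through successive infection steps, identifying an invariant violated by any $S_0$ of size at most $n-2$, and ruling out all intermediate ``ladder''-style configurations in which partial column and row infections might combine to dismantle $R' \times C'$. This refined accounting is the delicate step where I expect the bulk of the work to reside.
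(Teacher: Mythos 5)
Your upper bound is complete and correct, and it is essentially the paper's construction: the paper places the $n-1$ initial vertices along a diagonal $\{(v_1,w_1),\dots,(v_{n-1},w_{n-1})\}$ rather than in a single column, but in both cases the domination step leaves uninfected only a subset of one ``row'' $v_n\times \K_m^{(\ell)}$, and each remaining vertex is then absorbed by a column infection in which the candidate edge is forced. That half matches the paper.

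The genuine gap is the lower bound, which you give only as a plan (``I would attempt\dots the bulk of the work'' remains). The paper's route is a pigeonhole argument: if $|S_0|\le n-2$ there are two indices $i,i'$ with $v_i\times\K_m^{(\ell)}$ and $v_{i'}\times\K_m^{(\ell)}$ missing $S_0$, and (since $m\ge n$) two indices $j,j'$ with $\K_n^{(k)}\times w_j$ and $\K_n^{(k)}\times w_{j'}$ missing $S_0$; it then argues that the four vertices $(v_i,w_j),(v_i,w_{j'}),(v_{i'},w_j),(v_{i'},w_{j'})$ mutually block one another under condition 2 of Definition \ref{def:infection} and so are never infected. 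However, your own first-step computation --- that a column $c$ can be infected in one step whenever its uninfected part $R'\times\{c\}$ has size at most $k-1$ --- is exactly the issue this blocking argument must confront: condition 2 is \emph{not} violated if the infecting edge contains all of the uninfected vertices that share an edge with $A$. When only two rows miss $S_0$, each missed column has exactly two uninfected vertices, and since $k\ge 3$ a single column edge can contain both of them together with an infected vertex whose entire row is already observed. Concretely, in $\K_4^{(3)}\square\K_4^{(3)}$ with $S_0=\{(v_1,w_1),(v_2,w_2)\}$, after the domination step the set $\{(v_1,w_3)\}$ infects the edge $\{(v_1,w_3),(v_3,w_3),(v_4,w_3)\}$, and likewise for column $w_4$, so a set of size $n-2$ succeeds. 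So the difficulty you flagged is real and not merely a matter of more careful bookkeeping: the quantity your first-step obstruction actually certifies is $\min(n-k+1,\,m-\ell+1)$, and any completion of the lower bound to $n-1$ would have to exclude the ``one edge swallows all the blockers'' configuration, which the stated hypotheses do not rule out.
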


\begin{proof} Denote the vertices of $\K_n^{(k)}$ by $v_1,\ldots, v_n$ and the vertices of $\K_m^{(\ell)}$ by $w_1,\ldots, w_m$. Then all vertices of $\K_n^{(k)} \square \K_m^{(\ell)}$ are of the form $(v_i, w_j)$. For each $w_r\in V(\K_m^{(\ell)})$, define $K_n^{(k)}\times w_r = \{(v_i,w_r): 1\leq i \leq n\}$. Similarly, for each $v_s\in V(\K_n^{(k)})$ we define $v_s\times K_m^{(\ell)} = \{(v_s,w_j): 1\leq j \leq m\}$. Note that every edge of $\K_n^{(k)} \square \K_m^{(\ell)}$ is a subset of exactly one set of one of these forms.

Consider the set $S_0=\{(v_1,w_1), (v_2,w_2), \ldots, (v_{n-1},w_{n-1}) \}$. In the domination step, as any vertex $(v_i,w_j)$ is adjacent to each vertex in both $\K_n^{(k)}\times w_j$ and $v_i\times\K_m^{(\ell)}$, the only uninfected vertices are $\{(v_n, w_n), (v_n, w_{n+1}), \ldots, (v_n, w_m) \}$.  However, this means that for each $n\leq x \leq m$, the set $A_x=\{(v_{n-\ell},w_x), (v_{n-\ell+1},w_x), \ldots, (v_{n-1},w_x)\}$ is contained in the edge  $e=A_x \cup \{(v_n,w_x)\}$. Moreover, $A$ is not contained in any other edge containing an uninfected vertex as every other vertex in $\K_n^{(k)}\times w_x$ is infected. Hence $A_x$ infects $(v_n,w_x)$ for $n\leq x\leq m$ and so all vertices become infected. Therefore,  $\gpi\lp\K_n^{(k)} \square \K_m^{(\ell)}\rp \leq n-1$.

Assume for eventual contradiction that there exists some infectious power dominating set $S_0$ with $|S_0|\leq n-2$. By the Pigeonhole Principle, since $m\geq n \geq 4$, there are at least two $i$ so that $v_i\times \K_m^{(\ell)}$ contains no vertex of $S_0$. Without loss of generality, let these be $i$ and $i'$. In the same way, there exists $j$ and $j'$ so that $\K_n^{(k)}\times w_j$ and $\K_n^{(k)}\times w_{j'}$ contain no vertex of $S_0$. Now consider the vertices $(v_i,w_j), (v_i, w_{j'}), (v_{i'},w_{j})$, and $(v_{i'},w_{j'})$.

We show that there is no set that can infect these vertices. Any set which could infect $(v_i,w_j)$ must be of the form $A \subseteq \K_n^{(k)}\times w_j \setminus \{(v_i,w_j), (v_{i'},w_j)\}$ with $|A|\leq k-1$. We see that by construction of the $k$-complete hypergraph, there is a edge $e$ of $\K_n^{(k)}\times w_j$ containing $A \cup \{(v_i,w_j)\}$. There is also an edge $e'$ so that $A\cup \{(v_{i'},w_j)\} \subseteq \lp e\setminus \{(v_{i},w_j)\} \rp \cup \{(v_{i'},w_j)\} = e'$. Thus any such set $A$ of infected vertices that are in an edge with $(v_i,w_j)$ are also in a different edge with $(v_{i'},w_j)$ and so no such $A$ can infect  $(v_i,w_j)$ or $(v_{i'},w_j)$. In a similar way, we can consider $A'\subseteq \K_n^{(k)}\times w_{j'} \setminus \{(v_i,w_{j'}), (v_{i'},w_{j'})\}$ with $|A'|\leq k-1$, $A''\subseteq v_i\times \K_m^{(\ell)} \setminus \{(v_i,w_j), (v_i,w_{j'})\}$ with $|A''|\leq \ell-1$, and $A'''\subseteq v_{i'}\times \K_m^{(\ell)} \setminus \{(v_{i'},w_j), (v_{i'},w_{j'})\}$ with $|A'''|\leq \ell-1$ to see that no such subsets can infect any of $(v_i,w_j), (v_i, w_{j'}), (v_{i'},w_{j})$, or $(v_{i'},w_{j'})$. Therefore, there is no possible set of infected vertices that can infect these four vertices when $|S_0|\leq n-2$. Hence $\gpi\lp\K_n^{(k)} \square \K_m^{(\ell)}\rp \geq n-1$.

\end{proof}

Bergen et al. \cite{infection18} established a general upper bound on the infection number of Cartesian products which is greater than the infection number of either hypergraph.  
\begin{prop}{\rm\cite[Corollary 6.11]{infection18}}.
Let $\h_1$ and $\h_2$ be hypergraphs, then $I(\h_1\square \h_2) \leq I(\h_1)\lp I(\h_2) + |E(\h_2)| \rp$.
\end{prop}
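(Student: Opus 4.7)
The plan is to build an infection set $S_0$ for $\h_1 \square \h_2$ of the required size via a two-phase strategy: first I would infect the entire ``column'' $u\times V(\h_2)$ for every $u$ in a minimum infection set of $\h_1$, and then I would propagate horizontally across fibers. I would fix $S_1\subseteq V(\h_1)$ and $S_2\subseteq V(\h_2)$ to be minimum infection sets and fix a valid sequence of infection moves that carries $S_2$ to all of $V(\h_2)$ in $\h_2$. For each $e_2\in E(\h_2)$, let $A_{e_2}$ denote the subset of infected vertices used the first time this sequence infects $e_2$, and choose $v_{e_2}\in e_2$ so that $v_{e_2}\neq w$ whenever $A_{e_2}=\{w\}$ (such a choice exists because the move $\{w\}\to e_2$ does anything only when $|e_2|\geq 2$). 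Let
\[
S_0 \;=\; \bigcup_{u\in S_1}\Bigl((u\times S_2)\,\cup\,\{(u,v_{e_2}):e_2\in E(\h_2)\}\Bigr),
\]
which gives $|S_0|\leq I(\h_1)\bigl(I(\h_2)+|E(\h_2)|\bigr)$.

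For the first phase I would show that for each $u\in S_1$ the entire column $u\times V(\h_2)$ becomes infected, by mimicking the chosen $\h_2$-sequence inside $u\times V(\h_2)$. The key structural fact is that any subset of $u\times V(\h_2)$ of size at least two lies only in vertical edges $u\times e_2'$ of $\h_1\square\h_2$, so for such a subset the infection rule in $\h_1\square\h_2$ reduces exactly to the infection rule in $\h_2$. When $|A_{e_2}|\geq 2$ I would use $u\times A_{e_2}$ directly. When $A_{e_2}=\{w\}$ is a singleton -- the only case where horizontal edges of $\h_1\square\h_2$ would spoil the move, since $(u,w)$ has horizontal neighbors not yet infected -- I would instead use the two-element set $\{(u,w),(u,v_{e_2})\}$. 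Both members are infected (one by mimicking the sequence, one from $S_0$), the set lies in the vertical edge $u\times e_2$, and the $\h_1\square\h_2$-condition it must satisfy is implied by (strictly weaker than) the $\h_2$-condition for $\{w\}\to e_2$, since requiring $\{w,v_{e_2},V_2\}\subseteq e_2'$ is stricter than requiring $\{w,V_2\}\subseteq e_2'$.

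For the second phase, starting from $S_1\times V(\h_2)$ fully infected, I would mimic the $\h_1$-infection sequence from $S_1$ synchronously in every fiber $\h_1\times\{w\}$, maintaining the inductive invariant that after step $k$ of the $\h_1$-sequence the $\h_1\square\h_2$-infected region is exactly $U_k\times V(\h_2)$, where $U_k$ denotes the $\h_1$-infected set at step $k$. A step $A\to e_1$ of size at least two, applied in-fiber as $A\times\{w\}\to e_1\times\{w\}$, lies only in horizontal edges, so its validity reduces to the $\h_1$-rule. The subtle step is a singleton $\h_1$-move $\{u''\}\to e_1$: then $(u'',w)$ has vertical neighbors in $u''\times V(\h_2)$, but since $u''\in U_{k-1}$ the invariant guarantees $u''\times V(\h_2)$ is already infected, and the horizontal-neighbor requirement is the $\h_1$-Case-$1$ condition verbatim. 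The main obstacle of the whole argument is precisely this interplay between vertical and horizontal edges at singleton infection moves; the extra $|E(\h_2)|$ vertices per column upgrade each potentially dangerous $\h_2$-singleton move into a safe two-vertex move for Phase $1$, while synchronizing Phase $2$ across fibers ensures that singleton $\h_1$-moves never encounter an uninfected column.
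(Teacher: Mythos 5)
Your proof is correct. Note that the paper itself gives no proof of this statement---it is quoted from \cite{infection18} (Corollary 6.11)---so there is nothing to compare against line by line; but your construction (an infection set of $\h_2$ plus one designated vertex per edge of $\h_2$, replicated over each vertex of a minimum infection set of $\h_1$) accounts exactly for the $I(\h_1)\lp I(\h_2)+|E(\h_2)|\rp$ count, and you correctly isolate the only real danger in simulating one factor's infection sequence inside the product: a singleton set $A$ lies in edges of both the vertical and horizontal families, whereas any set of two or more vertices from a single column (resp.\ fiber) lies only in vertical (resp.\ horizontal) edges, so the product infection rule collapses to the factor's rule. Your Phase 1 fix (upgrading each singleton $\h_2$-move to a two-vertex move using the pre-infected $(u,v_{e_2})$, whose condition is weaker than the original singleton condition) and your Phase 2 invariant (singleton $\h_1$-moves start from vertices whose entire column is already infected) both check out. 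The only ingredients you use implicitly are that enlarging the set of infected vertices never invalidates a move (condition (2) only quantifies over uninfected vertices) and that vacuous moves into loops can be discarded; both are immediate.
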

\noi 

\subsection{Weak coronas}

The \emph{weak corona} of a $k$-uniform hypergraph $\G^{(k)}$ with a $(k-1)$-hypergraph $\h^{(k-1)}$, as defined in \cite{infection18}, is the $k$-uniform hypergraph denoted by $\G^{(k)}\circ_{w} \h^{(k-1)}$, with vertex set 
\[V\lp\G^{(k)}\circ_{w} \h^{(k-1)}\rp=\ds V(\G^{(k)}) \cup \lp\bigcup_{v\in V(\G^{(k)})} V\lp \h_v^{(k-1)} \rp \rp\] 
where $\h_v^{(k-1)}$ is a copy of $\h^{(k-1)}$ corresponding to vertex $v\in V\lp\G^{(k)}\rp$, and edge set
\[E\lp\G^{(k)}\circ_{w} \h^{(k-1)}\rp = E\lp\G^{(k)} \rp \cup \left\{ e_v \cup \{v\} : v\in V\lp \G^{(k)}\rp,  e_v\in E\lp \h_v^{(k-1)} \rp \right\}.\]
That is, the weak corona is formed by taking a copy of $\h^{(k-1)}$ for each vertex $v$ of $\G^{(k-1)}$ and then adding $v$ to each edge of its copy of $\h^{(k-1)}$. 

In \cite{infection18}, it was shown that $I\lp \G^{(k)}\circ_{w} \h^{(k-1)} \rp \leq |V(\G^{(k)})| I\lp \h^{(k-1)}\rp$. We obtain a better result, reflecting the strength of the domination step.

\begin{prop}
For any hypergraphs $\G^{(k)}$ and $\h^{(k-1)}$, 
\[\gpi\lp \G^{(k)}\circ_{w} \h^{(k-1)} \rp \leq \gamma_{P}\lp \G^{(k)}\circ_{w} \h^{(k-1)} \rp \leq  \gamma\lp \G^{(k)}\circ_{w} \h^{(k-1)} \rp \leq |V\lp\G^{(k)}\rp|.\] 
This bound is tight whenever $\h^{(k-1)}$ has at least two edges.
\end{prop}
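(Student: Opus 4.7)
The plan is to handle the three inequalities in sequence. The first two, $\gpi \le \gamma_P \le \gamma$, are immediate from Observation~\ref{obs:gpileqdom}. For the third, I would exhibit $D = V(\G^{(k)})$ as a dominating set for $\G^{(k)}\circ_w \h^{(k-1)}$: every vertex outside $D$ lies in some copy $V(\h_v^{(k-1)})$, and the only edges of the weak corona meeting $V(\h_v^{(k-1)})$ are the edges $e_v\cup\{v\}$ with $e_v\in E(\h_v^{(k-1)})$, each of which contains the corona vertex $v \in D$. This immediately gives $\gamma(\G^{(k)}\circ_w\h^{(k-1)}) \le |V(\G^{(k)})|$.

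For tightness under the assumption that $\h^{(k-1)}$ has at least two edges, it suffices to prove the matching lower bound $\gpi(\G^{(k)}\circ_w\h^{(k-1)}) \ge |V(\G^{(k)})|$. The plan is to establish the key claim that for every $v\in V(\G^{(k)})$, any infectious power dominating set $S_0$ must contain at least one vertex of $\{v\}\cup V(\h_v^{(k-1)})$. Once this is in hand, the lower bound follows because the sets $\{v\}\cup V(\h_v^{(k-1)})$ are pairwise disjoint as $v$ ranges over $V(\G^{(k)})$: the copies are vertex-disjoint from each other and from $V(\G^{(k)})$ except for the attachment vertex, which is unique to each copy.

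I would prove the key claim by induction on the step of the infectious power domination process, showing that if $S_0\cap(\{v\}\cup V(\h_v^{(k-1)}))=\varnothing$ then no vertex of $V(\h_v^{(k-1)})$ is ever infected. For the base case, the domination step adds only vertices in $N[S_0]$, and since the only edges incident to $V(\h_v^{(k-1)})$ have the form $e_v\cup\{v\}$ (none of which meet $S_0$ by assumption), $V(\h_v^{(k-1)})$ is untouched. For the inductive step, any set $A$ infecting an edge containing some uninfected $u\in V(\h_v^{(k-1)})$ must satisfy $A\subseteq e_v\cup\{v\}$ for some $e_v\in E(\h_v^{(k-1)})$, and the inductive hypothesis together with the assumption on $S_0$ forces $A=\{v\}$.

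The main obstacle will be to rule out $A=\{v\}$, and this is precisely where the hypothesis that $\h^{(k-1)}$ has at least two edges enters. Picking any second edge $e_v'\neq e_v$ in $E(\h_v^{(k-1)})$, reducedness guarantees a vertex $w\in e_v'\setminus e_v$; since $w\in V(\h_v^{(k-1)})$ it is uninfected by the inductive hypothesis. But $\{v,w\}\subseteq e_v'\cup\{v\}$ is contained in an edge while $w\notin e_v\cup\{v\}$, violating the uniqueness condition of the infection rule. Hence $\{v\}$ cannot infect $e_v\cup\{v\}$, closing the induction and yielding the desired lower bound.
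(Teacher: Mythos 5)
Your proposal is correct and follows essentially the same route as the paper: exhibit $V(\G^{(k)})$ as a dominating set for the upper bound, and for tightness argue that a copy $\{v\}\cup V(\h_v^{(k-1)})$ disjoint from $S_0$ can only be reached through $v$, which cannot infect because it lies in two edges $e_v\cup\{v\}$ and $e_v'\cup\{v\}$ with uninfected vertices. Your version merely makes explicit (via the induction and the choice of $w\in e_v'\setminus e_v$) what the paper's proof leaves implicit.
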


\begin{proof}
Observe that $V(\G^{(k)})$ is a dominating set. 

For tightness, let $\G^{(k)}$ be any hypergraph and let $\h^{(k-1)}$ be any $(k-1)$-hypergraph with at least two edges. 
Suppose for contradiction that $\gpi\lp\G^{(k)}\circ_w \h^{(k-1)}\rp < |V(\G^{(k)})|$ and let $S_0$ be a minimum infectious power dominating set. Then there exists some $v\in V(\G^{(k)})$ so that $V(\h^{(k-1)}_v)\cup \{v\}$ contains no vertex of $S_0$. By the construction of the weak corona, the only vertex of $V(\h^{(k-1)}_v)\cup \{v\}$ adjacent to a vertex outside of $V(\h^{(k-1)}_v)\cup \{v\}$ is $v$.  Thus the first vertex of $V(\h^{(k-1)}_v)\cup \{v\}$ to become infected is $v$. As $\h^{(k-1)}$ has at least two edges, we have $e,e'\in E(\h_v^{(k-1)})$. Then $v$ is adjacent to uninfected vertices in the edges $e\cup\{v\}$ and $e'\cup\{v\}$. 
However, this implies that $V(\h_v^{(k-1)})$ cannot become infected, a contradiction. Thus $\gpi \lp \G^{(k)}\circ_{w} \h^{(k-1)} \rp \geq |V(\G^{(k)})|$.

\end{proof}

Let $P_3^{(2)}$ denote the path graph on 3 vertices. 

\begin{cor}\label{cor:n4weakcorona}
Let $\G^{(3)}$ be any 3-uniform hypergraph. Then \[\gpi\lp \G^{(3)} \circ_{w} P_3 \rp = \gamma_{P}\lp \G^{(3)} \circ_{w} P_3^{(2)} \rp = |V\lp\G^{(3)}\rp|.\]
\end{cor}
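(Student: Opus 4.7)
The plan is to derive this as an immediate consequence of the preceding proposition on weak coronas. Recall that proposition establishes, for any hypergraphs $\G^{(k)}$ and $\h^{(k-1)}$, the chain
\[
\gpi\bigl(\G^{(k)} \circ_w \h^{(k-1)}\bigr) \leq \gamma_P\bigl(\G^{(k)} \circ_w \h^{(k-1)}\bigr) \leq \gamma\bigl(\G^{(k)} \circ_w \h^{(k-1)}\bigr) \leq |V(\G^{(k)})|,
\]
and its proof of tightness actually shows that whenever $\h^{(k-1)}$ has at least two edges, the lower bound $\gpi\bigl(\G^{(k)} \circ_w \h^{(k-1)}\bigr) \geq |V(\G^{(k)})|$ also holds, forcing all three parameters to equal $|V(\G^{(k)})|$.

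First, I would specialize the proposition by taking $k=3$ and $\h^{(k-1)} = P_3^{(2)}$, the path on three vertices regarded as a $2$-uniform hypergraph. The only hypothesis to check is that $P_3^{(2)}$ has at least two edges; this is immediate, since $P_3^{(2)}$ is a path with exactly two edges. Consequently, the tightness conclusion of the previous proposition applies verbatim, yielding
\[
\gpi\bigl(\G^{(3)} \circ_w P_3^{(2)}\bigr) = \gamma_P\bigl(\G^{(3)} \circ_w P_3^{(2)}\bigr) = |V(\G^{(3)})|.
\]

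This is exactly the conclusion of the corollary, so essentially no further work is required: the corollary is the instantiation of the general tight bound on weak coronas at the smallest nontrivial case $(k, \h^{(k-1)}) = (3, P_3^{(2)})$. There is no serious obstacle to overcome; the one thing to be careful about is simply confirming that the two displayed edges of $P_3^{(2)}$ do indeed satisfy the ``at least two edges'' hypothesis so that the tightness portion of the previous argument can be invoked.
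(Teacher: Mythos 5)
Your proposal is correct and matches the paper's intent exactly: the corollary is stated without a separate proof precisely because it is the instantiation of the preceding weak-corona proposition (including its tightness argument) at $k=3$ with $\h^{(k-1)} = P_3^{(2)}$, which has two edges. Nothing further is needed.
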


Corollary \ref{cor:n4weakcorona} demonstrates an infinite family of $3$-uniform hypergraphs that achieves the bound in Conjecture \ref{conj:noverfour} for both the power domination number and the infectious power domination number. Furthermore, $V(G)$ is an infectious power dominating set for which each vertex has exactly 3 private neighbors.

\section{Concluding remarks}\label{sec:conclusion}


Several interesting questions remain for both infectious power domination and power domination for hypergraphs. 

We have not yet found an generalized improvement of the bound in Corollary \ref{cor:nover3bound} or a counterexample to the bound in Conjecture \ref{conj:noverfour} and so this remains an open question for further study. We have seen two examples of infinite families that achieve the conjectured bound, in Proposition \ref{prop:n4lollipops} and Corollary \ref{cor:n4weakcorona}. Each of these families achieves the bound with a dominating set. However, the family $\lolli$ indicates that a counting argument similar to that used in \cite{zhaokangchang06} will not work for hypergraphs. 

While we know that the spider cover number is only an upper bound for the infectious power domination number for hypertrees as seen in Example \ref{ex:hypertree}, we do not know if there is a useful \emph{lower} bound.

Cartesian products also present a large number of open questions. We showed in Proposition \ref{prop:completecartesian} that the infectious power domination number of the Cartesian product of two hypergraphs can be greater than either hypergraph but we do not have a general upper or lower bound.

\section*{Acknowledgments}
This research was supported by the US Department of Defense's Science, Mathematics and Research for Transformation (SMART) Scholarship for Service Program.

\bibliographystyle{plain}
\bibliography{hypergraphs}

\end{document}